\def\namedlabel#1#2{\begingroup
    #2%
    \def\@currentlabel{#2}%
    \label{#1}\endgroup
}
\newtheorem{Thm}{Theorem}[section]
\newtheorem{Prop}[Thm]{Proposition}
\newtheorem{Lem}[Thm]{Lemma}
\newtheorem{Cor}[Thm]{Corollary}
\newtheorem*{Thm*}{Theorem}
\theoremstyle{definition}
\newtheorem{Def}[Thm]{Definition}}
\theoremstyle{definition}
\theoremstyle{definition}
\newenvironment{Proof}{\rm \trivlist\item[\hskip \labelsep{\bf
Proof.\quad}]}{\hfill\qed\par\medskip\endtrivlist}
\newcommand{\id}{\mathrm{id}}
\newcommand{\ol}{\overline}
\renewcommand{\epsilon}{\varepsilon}
\newcommand{\Inv}{\operatorname{Inv}}
\newcommand{\FInv}{\operatorname{FInv}}
\newcommand{\FIM}{\operatorname{FIM}}
\newcommand{\sub}{\operatorname{Sub}}
\newcommand{\csub}{\operatorname{CSub}}
\newcommand{\red}{\operatorname{red}}
\newcommand{\rrel}{\mathrel{\mathcal{R}}}
\newcommand{\sigmarel}{\mathrel{\sigma}}
\newcommand{\m}{\mathfrak m}
\numberwithin{equation}{section}
\title[$E$-unitary and $F$-inverse monoids, and closure operators]{$E$-unitary and $F$-inverse monoids, and closure operators on group Cayley graphs}
\author[N\'{o}ra Szak\'{a}cs]{N\'{o}ra Szak\'{a}cs}
\address{School of Mathematics, University of Manchester, Manchester M13 9PL, UK}
\email{nora.szakacs@manchester.ac.uk}
\begin{document}

\maketitle

\begin{center}
\textit{Dedicated to Mária B. Szendrei on the occasion of her 70th birthday.}
\end{center}

\begin{abstract}
We show that the category of $X$-generated $E$-unitary inverse monoids with greatest group image $G$ is equivalent to the category of $G$-invariant, finitary closure operators on the set of connected subgraphs of the Cayley graph of $G$. Analogously, we study $F$-inverse monoids in the extended signature $(\cdot, 1, ^{-1}, ^\m)$, and show that the category of $X$-generated $F$-inverse monoids  with greatest group image $G$ is equivalent to the category of $G$-invariant, finitary closure operators on the set of all subgraphs of the Cayley graph of $G$. As an application, we show that presentations of $F$-inverse monoids in the extended signature can be studied by tools analogous to Stephen's procedure in inverse monoids, in particular, we introduce the notions of $F$-Schützenberger graphs and $P$-expansions.
\end{abstract}

\section{Introduction}

In inverse semigroup theory, the study of inverse semigroups that can be understood in terms of their greatest group images and their semilattices of idempotents has played an important role. In particular, $E$-unitary inverse semigroups form the most widely studied subclass, one of the many equivalent definitions being that the greatest group morphism $\sigma^\sharp \colon S \to S/\sigma$ is idempotent pure, that is, the preimage of $1$ is the set of idempotents in $S$. From a universal algebraic point of view, $E$-unitary inverse semigroups form a quasivariety, which is precisely the Maltsev product of the varieties of semilattices and groups relative to the variety of inverse semigroups.

The structure of $E$-unitary inverse semigroups is completely understood due to a classical result of McAlister \cite{McAl}, which represents them as semigroups associated to triples $(\mathcal X, \mathcal Y, G)$, where $G$ is a group (the greatest group image of the semigroup), $\mathcal Y$ is a semilattice (the semilattice of its idempotents) and $\mathcal X$ is a poset containing $\mathcal Y$, acted upon by the group $G$. The poset $\mathcal X$ is hardest to grasp in terms of the semigroup, and McAlister's theorem has been reproved many times providing various ways to conceptualize it.
In \cite{BS03}, Steinberg interprets $\mathcal X$ as a set of connected subgraphs of the Cayley graph $\Gamma$ of the group $G$. 
We notice that we can rephrase this interpretation to view these subgraphs as fixed points of a closure operator,
expanding an idea that first appears in \cite{SzSz} in the special case of `finite-above' $E$-unitary inverse monoids. This leads us to show in Theorem \ref{Thm:catequivalent} that the category of all $X$-generated, $E$-unitary inverse monoids with greatest group image $G$ is equivalent to the category of all $G$-invariant, finitary closure operators on the poset of connected subgraphs of the group's Cayley graph (taken with respect to the generating set $X$). The $E$-unitary inverse monoid constructed from a given closure operator (Definition \ref{Lem:closure-operator-yields-S}) generalizes the construction of the Margolis-Meakin expansion of a group \cite{MM}, which arises from taking the identical closure operator, and is the initial object of the category. When the group $G$ is free on $X$, the Margolis-Meakin expansion of $G$ is the free inverse monoid on $X$, given in the form described famously by Munn. 

Steinberg's proof of the McAlister theorem is underpinned by the work of Stephen \cite{S90} on presentations of inverse monoids. Given an inverse monoid $S=\Inv\langle X \mid u_i=v_i,\ i \in I \rangle$, Stephen describes a procedure, now called Stephen's procedure, to iteratively build the strong components of the Cayley graph of $S$ from finite pieces by sewing on paths and applying Stallings' foldings. These strong components, called Sch\"utzenberger graphs, are in turn key to solving the word problem in $S$, which makes Stephen's procedure the fundamental way to study inverse monoids given by a presentation. 

The natural morphism $\sigma^\sharp \colon S \to S/\sigma$ defines a graph morphism from the Cayley graph of $S$ to the Cayley graph of $S/\sigma$. It is well-known that $S$ is $E$-unitary if and only if this graph morphism is injective on each of the strong components (i.e. the Sch\"utzenberger graphs). In Steinberg's interpretation, the set $\mathcal X$ consists exactly of the embedded copies of the Sch\"utzenberger graphs. Viewing Sch\"utzenberger graphs as naturally embedded into the group Cayley graph also simplifies Stephen's procedure in the $E$-unitary case: the graphs can be built directly in the group Cayley graph, which alleviates the need for Stallings' foldings. This version of Stephen's procedure is essentially what defines the closure operator in our interpretation.

$F$-inverse monoids are those inverse semigroups where each $\sigma$-class has a greatest element with respect to the natural partial order -- these are automatically unital, with $1$ being the greatest element of the $\sigma$-class containing the idempotents. $F$-inverse monoids are automatically $E$-unitary too, but have been studied widely in their own right: see \cite{AKSz} and the references therein.

Kinyon observed in a 2018 conference talk that $F$-inverse monoids form a variety (in the sense of universal algebra) in the signature $(\cdot, 1, ^{-1}, ^\m)$, where $s^\m$ is the greatest element in the $\sigma$-class of $s$. The authors in \cite{AKSz} describe the free object in the variety, and more generally, the initial object in the category of all $X$-generated $F$-inverse monoids with a greatest group image $G$. Their construction is a clever modification of the Margolis-Meakin expansion. Inspired by (and building on) this work, we adapt much of what is described above from $E$-unitary to $F$-inverse monoids.
In particular, we associate closure operators to $F$-inverse monoids (Definition \ref{Lem:S-yields-closure-F-inv}), which gives rise to a suitable analogue of Sch\"utzenberger graphs we call $F$-Sch\"utzenberger graphs (Definition \ref{def:F-Sch-graph}), and show that these play the same role in the word problem for $F$-inverse monoids as Sch\"utzenberger graphs do for inverse monoids. We also describe an analogue of Stephen's procedure to study presentations of $F$-inverse monoids. Analogously to our results on $E$-unitary inverse monoids, we prove that the category of all $X$-generated, $F$-inverse monoids with greatest group image $G$ is equivalent to the category of all finitary closure operators on the poset of (not necessarily connected) subgraphs of the group's Cayley graph. The identical closure operator, again, corresponds to the initial object of the categories, and the respective construction of $F$-inverse monoids is precisely the one given in \cite{AKSz}. As an application, we show that free inverse monoids are not finitely presented as $F$-inverse monoids.

\subsection*{Acknowledgements.} {The author would like to thank Mark Kambites and Benjamin Steinberg for helpful conversations.}

\section{Preliminaries}

We refer the reader to \cite{Law} on the basics on inverse semigroups, and to \cite{BS} on the basics of universal algebra. In this paper, all algebraic structures we consider will come with a chosen generating set. We say that an algebraic structure $S$ is \emph{$X$-generated via the map} $\iota \colon X \to S$ if $S$ is generated by $\iota(X)$. Note that we do not require $\iota$ to be an embedding.
Given two $X$-generated algebraic structures $S_1, S_2$ via the maps  $\iota_i \colon X \to S_i$, $i=1,2$, we say that a morphism $\psi\colon S_1 \to S_2$ respects the generators, or is \emph{canonical}, if $\iota_2=\psi \circ \iota_1$. 

All inverse semigroups we consider will be assumed to have an identity element, to avoid worrying about the technical difference between a monoid being $X$-generated as a semigroup vs being $X$-generated as a monoid. Our results can be adapted to inverse semigroups via the usual construction: any inverse semigroup $S$ can be turned into an inverse monoid $S^1$ by adjoining an external identity. 

We view inverse monoids as algebraic structures in the signature $(\cdot, 1, ^{-1})$, where they form a variety. Any element of an $X$-generated inverse monoid can be represented by an element of the so-called \emph{free monoid with involution} $(X \cup X^{-1})^*$, where $X^{-1}=\{x^{-1}: x \in X \}$ is a formal set of symbols in bijection with $X$. Given a word $w \in (X \cup X^{-1})^*$ and an $X$-generated inverse monoid $S$, we denote by $w_S$ the element of $S$ represented by $w$.

Given an $X$-generated inverse monoid (in particular, a group) $S$, the Cayley graph $\Gamma_X$ of $S$ is an edge-labeled digraph on the vertex set $V(\Gamma_X)=S$, 
where for each $s \in S$ and $x \in X \cup X^{-1}$, there is an
edge labeled by $x$ pointing from $s$ to $sx_S$. The label of an edge $e$ is denoted by $l(e)$, and the initial and terminal vertices of $e$ are denoted by $\alpha(e)$ and $\omega(e)$ respectively. If $S$ is a group, 
then $\Gamma_X$ is strongly connected, in particular every edge $e$ has an inverse pair $e^{-1}$ with $\alpha(e^{-1})=\omega(e),\ \omega(e^{-1})=\alpha(e)$ and $l(e^{-1})=l(e)^{-1}$. Furthermore, the left Cayley representation of the group induces a left action on its Cayley graph by edge-labeled automorphisms.

If $S$ is not a group, $\Gamma_X$ is never strongly connected, but when $e$ is an edge such that $\alpha(e)$ and $\omega(e)$ are in the same strong component of $\Gamma_X$, then $e$ does have an inverse pair $e^{-1}$ with the above properties (see for example \cite{S90}). The strong components of $\Gamma_X$ are called the \emph{Schützenberger graphs of $S$}, and the Schützenberger graph containing the vertex $s \in S$ is denoted by $S\Gamma(s)$. It follows by definition that the vertex set $V(S\Gamma(s))$ is exactly the $\rrel$-class of $s$. In particular, $V(S\Gamma(s))$ contains exactly one idempotent of $S$, namely $ss^{-1}$. If $w \in (X \cup X^{-1})^\ast$, then the Schützenberger graph $S\Gamma(w)$ of $w$ is defined as $S\Gamma(w_S)$.

In the paper, all graphs considered are digraphs edge-labeled by the set $X \cup X^{-1}$, in which the edge set is equipped with an involution $^{-1}$ such that $\alpha(e^{-1})=\omega(e),\ \omega(e^{-1})=\alpha(e)$ and $l(e^{-1})=l(e)^{-1}$. We call graphs with these properties \emph{$X$-graphs}. The $X$-graphs we consider (in particular, the examples above) are also \emph{deterministic}: for each vertex $v$ and each label $x \in X \cup X^{-1}$, there is at most one edge $e$ with $\alpha(e)=v$ and $l(e)=x$. By a subgraph of an $X$-graph, we mean a subset of edges and vertices closed under $\alpha$, $\omega$ and $^{-1}$ -- notice that this is also an $X$-graph. An $X$-graph is connected if and only if it is strongly connected, so we do not distinguish between the two notions. 

A path in an $X$-graph is either a non-empty sequence $e_1 e_2 \ldots e_n$ of consecutive edges (i.e. $\omega(e_i)=\alpha(e_{i+1})$), or a single vertex $v$, viewed as the empty path around that vertex $v$. The label of a path $e_1 \ldots e_n$ is defined to be the word $l(e_1) \cdots l(e_n) \in (X \cup X^{-1})^\ast$, and the label of a vertex $v$ is defined to be the empty word.
We compose connecting paths in the usual way, and define the inverse of a path $p=e_1  \ldots e_n$ to be the path $p^{-1}=e_n^{-1} \ldots e_1^{-1}$, and the inverse of an empty path as itself. We call two paths $p$ and $q$ \emph{coterminal} if $\alpha(p)=\alpha(q)$ and $\omega(p)=\omega(q)$.
 
Notice that in a deterministic $X$-graph, a path is uniquely determined by its initial vertex and label. In the Cayley graph of a group, paths starting from $1$ will play a special role, so we introduce the notation of $\ol w$ to denote the path labeled by $w$ starting at $1$. The path starting at $g$ labeled $w$ is then the image of the path $\ol w$ under the left action of $g$ on $\Gamma_X$, which we denote by $g \ol w$. The \emph{graph spanned by a path} $p$ is the subgraph induced by all edges and vertices occurring in the path $p$, and is denoted by $\langle p \rangle$.
 
It is proved in \cite{S90} that given an $X$-generated inverse monoid $S$ and $s \in S$, a word $w \in (X \cup X^{-1})^\ast$ labels a path in $S\Gamma(s)$ from $ss^{-1}$ to $s$ if and only if $w_S \geq s$. In particular, given two words $w_1, w_2 \in (X \cup X^{-1})^\ast$, we have $(w_1)_S=(w_2)_S$ if and only if $w_1$ labels a path in $S\Gamma(w_2)$ from $(w_2w_2^{-1})_S$ to $(w_2)_S$, and vice versa. It follows that if one can effectively construct arbitrarily large balls of the Sch\"utzenberger graphs $S\Gamma(w_S)$ for any $w \in (X \cup X^{-1})^\ast$, then one can decide the word problem in $S$. 

Stephen in \cite{S90} defines a procedure which, given an $X$-generated inverse monoid $S$ by a presentation $\Inv\langle X \mid u_i=v_i,\ i \in I\rangle$, and a word $w \in (X \cup X^{-1})^\ast$, constructs a sequence of finite graphs approximating (in a precise sense) the Sch\"utzenberger graph $S\Gamma(w_S)$. When $S$ is an $E$-unitary inverse monoid, this procedure simplifies a lot, so we restrict to describing this case. 

Let $S$ be an inverse monoid generated by $X$ via the map $\iota_S \colon X \to S$. Then $G=S/\sigma$ is also $X$-generated by $\iota_G= \sigma^\sharp \circ \iota_S$ where $\sigma^\sharp\colon S \to G$ is the natural projection onto $\sigma$-classes, moreover the map $\sigma^\sharp$ is by definition canonical. For any $s \in S$, this induces a graph morphism from $S\Gamma(s)$ to the Cayley graph $\Gamma_X$ of $G$ by mapping the vertex $t$ to $\sigma^\sharp(t)=t\sigma$. When $S$ is $E$-unitary, the map $\sigma^\sharp$ is injective on the $\rrel$-classes, so in particular this graph morphism is injective, and we can therefore view $S\Gamma(s)$ as a subgraph of $\Gamma_X$.

In the next claim, we sum up the relevant properties of Sch\"utzenberger graphs of $E$-unitary inverse monoids, based on \cite{S90}. While this is not explicitly stated in \cite{S90}, it easily follows from it and is well-known to experts.

\begin{Prop} \label{prop:stephen_main}
Let $S$ be an $X$-generated, $E$-unitary inverse monoid, and let $w \in (X \cup X^{-1})^*$. Let $G$ be the maximal group image of $S$, viewed as an $X$-generated group, and let $\Gamma_X$ be its Cayley graph. Then the following hold:
\begin{enumerate}
	\item\label{item:stephen_main_Sgraph} The Schützenberger graph $S\Gamma(w_S)$ is isomorphic to the smallest subgraph $\Delta$ of $\Gamma_X$ such that it contains the path $\ol w$, and 
	for any pair of vertices $g, h \in V(\Delta)$ and any pair of words $u, v \in (X \cup X^{-1})^*$ representing the same element of $S$, $\Delta$ contains a path from $g$ to $h$ labeled by $u$ if and only if it contains the path from $g$ to $h$ labeled by $v$.
	\item If $S$ is given by a presentation $\Inv\langle X \mid u_i=v_i,\ i \in I\rangle$, then it suffices to take $u,v$ to be a pair of equal relators in (\ref{item:stephen_main_Sgraph}).
	\item \label{item:stephen_main_language} A word $u \in (X \cup X^{-1})^\ast$ labels a path in $S\Gamma(w)$ from $1$ to $w_G$ if and only if $u_S \geq w_S$.
\end{enumerate}
\end{Prop}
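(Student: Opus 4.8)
The plan is to derive all three statements from the facts about Stephen's procedure recalled above the proposition, together with one elementary observation about Sch\"utzenberger graphs. Statement (3) is the quickest: the result of \cite{S90} recalled in the preliminaries says that $u$ labels a path in $S\Gamma(s)$ from $ss^{-1}$ to $s$ if and only if $u_S\ge s$. Applying this with $s=w_S$ and transporting along the graph morphism $S\Gamma(w_S)\to\Gamma_X$ induced by $\sigma^\sharp$ --- which is injective because $S$ is $E$-unitary, and which sends the idempotent $w_Sw_S^{-1}$ to $1$ and $w_S$ to $w_G$ --- yields exactly (3).

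Next I would isolate the following observation, the only computation really needed: for any $X$-generated inverse monoid, a vertex $t$ of its Cayley graph and a word $u=x_1\cdots x_n$, the path labeled $u$ that starts at $t$ lies entirely inside the Sch\"utzenberger graph $S\Gamma(t)$ if and only if $t^{-1}t\le u_Su_S^{-1}$. Indeed, this path stays inside $S\Gamma(t)$ exactly when every vertex $tp_S$ it visits, $p$ ranging over the prefixes of $u$, is $\rrel$-related to $t$, which amounts to $t^{-1}t\le p_Sp_S^{-1}$ for every prefix $p$; and the meet (in the semilattice of idempotents of $S$) of the elements $p_Sp_S^{-1}$ over all prefixes $p$ of $u$ equals $u_Su_S^{-1}$, since $u_Su_S^{-1}=p_S(q_Sq_S^{-1})p_S^{-1}\le p_Sp_S^{-1}$ whenever $u=pq$, while $u$ is a prefix of itself. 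The crucial consequence is that this condition, like the endpoint $tu_S$ of the path, depends on the word $u$ only through the element $u_S$.

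For (1), that $S\Gamma(w_S)$ contains the path $\ol w$ follows from (3) applied to $u=w$. For the closure property, take $g,h\in V(S\Gamma(w_S))$; by $E$-unitarity they lift uniquely to vertices $s_g,s_h$ of $S\Gamma(w_S)$ in the Cayley graph of $S$, and $\sigma^\sharp$ identifies $S\Gamma(w_S)$ with its image in $\Gamma_X$. If $u,v$ represent the same element of $S$ and $S\Gamma(w_S)$ contains a path from $g$ to $h$ labeled $u$, then correspondingly it contains a path from $s_g$ to $s_h$ labeled $u$, so by the observation $s_gu_S=s_h$ and $s_g^{-1}s_g\le u_Su_S^{-1}$; since $u_S=v_S$, the same two conditions hold with $v$ in place of $u$, so the path labeled $v$ from $s_g$ lies in $S\Gamma(w_S)$ and ends at $s_h$, and its $\sigma^\sharp$-image is the required path from $g$ to $h$ labeled $v$. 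The converse is symmetric.

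It remains to prove minimality in (1) and statement (2); for this I would invoke Stephen's procedure \cite{S90} in the form it takes over an $E$-unitary monoid, as outlined in the preliminaries: $S\Gamma(w_S)$ is the smallest subgraph $\Delta$ of $\Gamma_X$ that contains $\ol w$ and has the property that whenever $\Delta$ contains a path from $g$ to $h$ labeled by one member of a relator pair $(u_i,v_i)$ it also contains the coterminal path labeled by the other --- here one uses that from any vertex of $\Gamma_X$ the words $u_i$ and $v_i$ label coterminal paths, as $(u_i)_G=(v_i)_G$. This description is precisely (2). Minimality in (1) then follows: any subgraph satisfying the (stronger) closure property of (1) in particular satisfies the relator version, hence contains the smallest such graph, namely $S\Gamma(w_S)$; combined with the previous paragraph this shows $S\Gamma(w_S)$ is the smallest subgraph of $\Gamma_X$ containing $\ol w$ with the full closure property. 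The one genuinely delicate point I anticipate is this last step: distilling from \cite{S90} the precise ``foldings are automatic'' form of Stephen's procedure in the $E$-unitary setting that is used above; everything else is bookkeeping with the cited results and the semilattice computation.
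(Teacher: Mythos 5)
The paper does not actually prove this proposition: it is stated with the remark that it ``is not explicitly stated in \cite{S90}, it easily follows from it and is well-known to experts,'' so there is no argument to compare yours against line by line. Judged on its own, your writeup is correct and is a reasonable way to assemble the statement from the two ingredients the paper itself cites, namely the characterisation ``$u$ labels a path in $S\Gamma(s)$ from $ss^{-1}$ to $s$ iff $u_S\ge s$'' and the convergence of full $P$-expansions to $S\Gamma(w_S)$. Your semilattice observation is right: $t\rrel tp_S$ iff $t(p_Sp_S^{-1})t^{-1}=tt^{-1}$ iff $t^{-1}t\le p_Sp_S^{-1}$, and $\bigwedge_p p_Sp_S^{-1}=u_Su_S^{-1}$ over prefixes $p$ of $u$, so membership of the $u$-path in $S\Gamma(t)$ indeed depends only on $u_S$; this cleanly gives the closure property in (1), and (3) is a direct transport along the injective morphism $\sigma^\sharp$ (injectivity on edges follows from injectivity on vertices plus determinism, which is worth saying explicitly). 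The only genuine load-bearing citation is the one you flag yourself: that in the $E$-unitary case Stephen's iteration inside $\Gamma_X$ needs no foldings and its limit is the smallest relator-closed graph containing $\ol w$; this is exactly what the paper also takes from \cite{S90} without proof, so you are not assuming more than the author does. Your derivation of minimality in (1) from (2) (smallest relator-closed $\subseteq$ any fully closed graph, while $S\Gamma(w_S)$ is itself fully closed) is a nice way to order the claims and is logically sound.
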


It follows that given an $X$-generated inverse monoid $S$ and a word $w\in (X \cup X^{-1})^*$, $S\Gamma(w_S)$ can be approximated by the sequence of graphs $\Delta_1, \Delta_2, \ldots \subseteq \Gamma_X$ where $\Delta_1=\langle \ol w \rangle$, and $\Delta_{i+1}$ is obtained from $\Delta_i$ by performing what Stephen calls \emph{$P$-expansions}. 
More precisely, if $u, v\in (X \cup X^{-1})^*$ are such that $u_S=v_S$, and furthermore if $g, h \in V(\Delta_i)$ are vertices such that $\Delta_i$ contains a path from $g$ to $h$ labeled by $u$, but not by $v$, then the corresponding $P$-expansion adds the
subgraph $\langle g\ol v\rangle$ to $\Delta_i$. (Note that since $u_S=v_S$ and hence $u_G=v_G$, this path will end at $h$.) 
We define $\Delta_{i+1}$ to be the \emph{full $P$-expansion of $\Delta_i$}, defined as the union of $\Delta_i$ and the set of all such subgraphs $\langle g\ol v\rangle$, i.e. the graph obtained by performing all possible $P$-expansions on $\Delta_i$ simultaneously. Stephen's results imply that $S\Gamma(w_S)=\cup_{i=1}^\infty \Delta_i$. Furthermore, if $S$ is given by a presentation $\Inv\langle X \mid u_i=v_i,\ i \in I\rangle$ it suffices to perform full $P$-expansions with respect to pairs of equal relators.

\subsection{$F$-inverse monoids}

While $F$-inverse monoids are inverse monoids themselves, whenever we talk about $F$-inverse monoids, we consider them in the extended signature $(\cdot, 1, ^{-1}, ^\m)$, where $a^\m$ is the greatest element in the $\sigma$-class of $a$. 
$F$-inverse monoids form a variety in this signature, which was first observed by Kinyon, and is formally proved in \cite{AKSz}. There is a difference between an $F$-inverse monoid being $X$-generated as an $F$-inverse monoid (in the extended signature) and $X$-generated as an inverse monoid: the elements of $X$-generated $F$-inverse monoids can be represented by formal expressions $u_0 v_1^\m u_1  \cdots v_n^\m u_n$ where $u_i, v_i \in (X\cup X^{-1})^\ast$. Following \cite{AKSz}, we denote the set of all such terms by $\mathbb I\m_X$. It is useful to note that the identities $(w^{-1})^\m=(w^\m)^{-1}$ and $(u_0 v_1^\m u_1  \cdots v_n^\m u_n)^\m=(u_0 v_1 u_1  \cdots v_n u_n)^\m$ hold in the variety of $F$-inverse monoids -- both can be easily seen by definition, but a proof is also found in \cite{AKSz}. Groups are, naturally, $F$-inverse monoids with $^\m$ being the identical operation. As before, we use $w_S$ to denote the element of an $X$-generated $F$-inverse monoid $S$ represented by $w \in \mathbb I\m_X$.

Let $S$ be an $X$-generated $F$-inverse monoid with greatest group image $G$, which has Cayley graph $\Gamma_X$. It is a crucial idea in Proposition \ref{prop:stephen_main} that there is a correspondence between words in $(X \cup X^{-1})^*$ and paths in $\Gamma_X$ starting at a given vertex. For terms in $\mathbb I\m_X$, \cite{AKSz} introduces a corresponding notion of a \emph{journey}, consisting of paths interrupted by `jumps'. Formally, a journey is a sequence $j=(p_1, p_2, \ldots, p_n)$ of paths which are not connecting (i.e. $\omega(p_i)\neq \alpha(p_{i+1}))$, and we define $\alpha(j)=\alpha(p_1)$, and $\omega(j)=\omega(p_n)$. Like paths, connecting journeys can be composed: if $j_1=(p_1, p_2, \ldots, p_n), j_2=(q_1, q_2, \ldots, q_m)$ are journeys with $\omega(j_1)=\alpha(j_2)$, the journey $j_1j_2$ is defined to be $(p_1, p_2, \ldots, p_nq_1, q_2, \ldots, q_m)$. We similarly define $(p_1, p_2, \ldots, p_n)^{-1}$ as $(p_n^{-1}, p_{n-1}^{-1}, \ldots, p_1^{-1})$.

Given a term  $w=(u_0 v_1^\m u_1  \cdots v_n^\m u_n)$ in $\mathbb I\m_X$, the correponding journey $\ol w$ from the vertex $1$ consists of alternatingly traversing the paths labeled by $u_{i}$, followed by a `jumps' labeled by $v_{i+1}$, teleporting us from our current position $h \in V(\Gamma_X)$ to $h (v_{i+1})_G \in V(\Gamma_X)$. More precisely, $$\ol w=(\ol{u_0}, (u_0v_1)_G\ol{u_1}, \ldots, (u_0v_1 \ldots u_{n-1}v_n)_G \ol u_n).$$ We call $\ol w$ the journey labeled by $w$, starting at $1$. We similarly define the journey labeled by $w$, starting at $g$ to be the journey $g\ol w$.

Unlike for paths, given a journey $j=(p_1, p_2, \ldots, p_n)$ in $\Gamma_X$, there is no canonical way to associate a label in $\mathbb I\m_X$ to $j$: the `jumps' from $\omega(p_i)$ to $\alpha(p_{i+1})$ can be represented by any term $v_i^\m$ with $ \omega(p_i)(v_i)_G=\alpha(p_{i+1})$. However, the value $(v_i^\m)_S$ is unique: it is the greatest element in the $\sigma$-class $\omega(p_i)^{-1}\alpha(p_{i+1}).$ More generally, for any two terms $w_1, w_2 \in \mathbb I\m_X$ with $\ol w_1=\ol w_2$, we have $(w_1)_S=(w_2)_S$. Thus for any journey $j$, we define $l(j)_S$ to be the common $S$-value of any term $w \in \mathbb I\m_X$ labeling $j$. 
We define the subgraph of $\Gamma_X$ \emph{spanned by a journey} $j=(p_1, p_2, \ldots, p_n)$ to be the graph $\langle p_1 \rangle \cup \cdots \cup \langle p_n \rangle$. 

\subsection{Closure operators}

Given an $X$-graph $\Gamma$, denote the set of its subgraphs by $\sub \Gamma$, and the set of its connected subgraphs by $\csub \Gamma$. Both these sets are posets under inclusion, moreover, $\sub \Gamma$ forms a complete lattice with respect to set union and intersection. While $(\csub \Gamma, \subseteq)$ does not admit meets or joins, it is a \textit{relatively maximal lower bound complete} (in short, rmlb-complete) poset, which is defined as follows.  

Given a poset $(P, \leq)$ and a subset $Y \subseteq P$, let $Y^\downarrow$ denote the set of lower bounds for $Y$, i.e. the set of elements $x \in P$ with $x \leq y$ for all $y \in Y$. Let $\max Y^\downarrow$ denote its maximal elements, called the set of maximal lower bounds for $Y$. If $P$ happens to be a lattice, then $\max Y^\downarrow=\left\{ \bigwedge Y\right\}$.
We call a poset $P$ \textit{rmlb-complete} if for any $Y \subseteq P$ and $x \in P$, if $x$ is a lower bound for $Y$, then it is below a maximal lower bound for $Y$. 

The poset $(\csub \Gamma, \subseteq)$ is indeed rmbl-complete: given any set $Y$ of connected subgraphs, $\max Y^\downarrow$ is exactly the set of connected components of $\bigcap Y$, and any graph in $\csub \Gamma$ which is a subgraph of all graphs in $Y$ will be contained in one of these components. Notice that if $\Gamma$ is connected, then it is the maximum element of $\csub \Gamma$.

Recall that a \textit{closure operator} on a poset $(P, \leq)$ is a map $()^c \colon P \to P$ which is \textit{extensive}, that is, $x \leq x^c$, \textit{monotonous}, that is if $x \leq y$ then $x^c \leq y^c$, and \textit{idempotent}, that is $(x^c)^c =x^c$. The fixed points of $()^c$ are also called \textit{closed elements}.  We will denote the set of fixed points for $()^c$ by $Y_c$.
It is a well-known fact that if $P$ is a complete lattice, then $Y \subseteq P$ is the set of fixed points of some closure operator if and only if $Y$ is meet-closed and contains the maximum element of $P$. Given such a set $Y$, we can recover the corresponding closure operator as $x^c=\bigwedge \{y \in Y: x \leq y\}$.

This correspondence between closed elements and closure operators extends to rmlb-complete posets. By \cite[Theorem 4.4]{Ranz99}\footnote{There is a typo in the statement of \cite[Theorem 4.4]{Ranz99}: $\mathfrak M(Y)$ should be $M(Y)$.}, given an rmbl-complete poset $(P, \leq)$, a set $Y \subseteq P$ is the set of fixed points for a closure operator on $P$ if and only if for any $Z \subseteq Y$, $Y$ contains $\max Z^\downarrow$, and $Y$ contains the maximal elements of $P$. Given such a set $Y$, we can recover the corresponding closure operator as follows: $x^c$ is the (necessarily unique) element of $\max \{y \in Y: x\leq y\}^\downarrow$ above $x$. In particular, if the poset is $\csub \Gamma$,
then $Y \subseteq \csub \Gamma$ is the set of fixed points for some closure operator $()^c$ if for any $Z \subseteq Y$, $Y$ contains the connected components of $\bigcap Z$ as well as the connected components of $\Gamma$. Then given $\Delta \in \csub \Gamma$, we have that $\Delta^c$ is the unique component of $\bigcap \{\Xi \in Y \colon \Delta \subseteq \Xi\}$ containing $\Delta$.

For us, the key interest is the case of the Cayley graph $\Gamma_X$ of an $X$-generated group $G$.
We say that a closure operator on $(\sub \Gamma_X, \subseteq)$ or $(\csub \Gamma_X, \subseteq)$ is \textit{$G$-invariant} if the set of fixed points is invariant under the left action of $G$ on $\Gamma_X$, that is if for any (connected) subgraph $\Delta$, we have $(g\Delta)^c=g(\Delta^c)$. As usual, we call a closure operator on $(\sub \Gamma_X, \subseteq)$ or  $(\csub \Gamma_X, \subseteq)$ \textit{finitary} if $\Delta^c =\bigcup \{F^c: F \subseteq \Delta \hbox{ is finite}\}$ holds if for any (connected) subgraph $\Delta$, observe that this set union is indeed connected when $\Delta$ is connected and $()^c$ is a closure operator on $(\csub \Gamma_X, \subseteq)$. Finitary closure operators are completely determined by the closures of finite sets -- we will call these \textit{compact sets}.

\section{Closure operators and Sch\"utzenberger graphs}

\subsection{Closure operators associated to $E$-unitary and $F$-inverse monoids}

In this subsection, we show how $X$-generated $E$-unitary inverse monoids and $F$-inverse monoids give rise to closure operators on $\csub \Gamma_X$ and $\sub \Gamma_X$ respectively, where $\Gamma_X$ is the Cayley graph of their greatest group image. More generally, we associate closure operators to binary relations -- these will be relevant in Section \ref{Sec:presentations} where we study presentations of $F$-inverse monoids.

\begin{Def}[Closure operators from $E$-unitary inverse monoids]
	\label{def:E-unit-yields-closure}
	Given an $X$-generated group $G$ and a symmetric relation $R$ on $(X \cup X^{-1})^\ast$ where any pair of $R$-related words are equal in $G$, we define a closure operator on $\csub\Gamma_X$
	by its set of closed graphs: we define $\Delta \in \csub\Gamma_X$ to be closed whenever for any pair $(u,v) \in R$, and any pair of vertices $g,h \in V(\Delta)$, $u$ labels a path from $g$ to $h$ in $\Delta$ if and only if $v$ does. 
	
	In particular, any $X$-generated $E$-unitary inverse monoid $S$ with greatest group image $G$ gives rise to such a relation $R$, consisting of pairs $(u,v)$ with $u_S=v_S$ -- we denote the corresponding closure operator by $c_S$. 
\end{Def}

\begin{Lem}
	\label{Lem:S-yields-closure-E-unit}
	The operator $()^{c_R}$ is a $G$-invariant closure operator on $(\csub \Gamma_X, \subseteq)$. In particular, for any $X$-generated $E$-unitary inverse monoid $S$, the operator $()^{c_S}$ is a $G$-invariant closure operator on $(\csub \Gamma_X, \subseteq)$. 
\end{Lem}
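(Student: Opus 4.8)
The plan is to verify that the set $Y$ of closed graphs specified in Definition~\ref{def:E-unit-yields-closure} satisfies the criterion for being the fixed-point set of a closure operator on the rmlb-complete poset $(\csub \Gamma_X, \subseteq)$, namely: for every $Z \subseteq Y$ the set $\max Z^\downarrow$ is contained in $Y$, and the maximal elements of $\csub \Gamma_X$ (the connected components of $\Gamma_X$, here just $\Gamma_X$ itself since $G$ is a group) lie in $Y$. The latter is immediate: in $\Gamma_X$, deterministic-ness and the fact that $u_G = v_G$ for all $(u,v) \in R$ mean that a path from $g$ labeled $u$ and a path from $g$ labeled $v$ both exist (the Cayley graph of a group is complete) and both terminate at $g u_G = g v_G$, so $\Gamma_X$ is trivially closed. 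Once this criterion is checked, the general theory quoted from \cite{Ranz99} hands us a closure operator $()^{c_R}$; $G$-invariance will then follow from $G$-invariance of $Y$, and the final sentence about $E$-unitary $S$ is just the special case $R = \{(u,v): u_S = v_S\}$.

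The heart of the argument is therefore showing $Y$ is closed under taking $\max Z^\downarrow$. Fix $Z \subseteq Y$ and let $\Theta$ be a connected component of $\bigcap Z$; I must show $\Theta \in Y$. So take $(u,v) \in R$ and vertices $g, h \in V(\Theta)$, and suppose $\Theta$ contains a path $p$ from $g$ to $h$ labeled $u$; I want the path from $g$ labeled $v$ to lie in $\Theta$. Since $\Theta \subseteq \Delta$ for every $\Delta \in Z$ and each such $\Delta$ is closed, each $\Delta$ contains the (unique, by determinism) path $q_\Delta$ from $g$ labeled $v$, which ends at $g v_G = g u_G = h$. By uniqueness all these $q_\Delta$ are the \emph{same} path $q$, so $q \subseteq \bigcap Z$. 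Finally $q$ is connected and shares the vertex $g$ with the component $\Theta$, hence $q \subseteq \Theta$; symmetrically for the converse direction. This shows $\Theta$ is closed. The same bookkeeping, using that determinism makes $g\ol v$ the unique $v$-labeled path from $g$, shows $G$-invariance: if $\Delta$ is closed then $g\Delta$ is closed, because a $u$-path in $g\Delta$ between $gk, gk'$ translates under the automorphism $g^{-1}$ to a $u$-path in $\Delta$ between $k, k'$, and applying closedness of $\Delta$ and then $g$ again gives the $v$-path in $g\Delta$; thus $Y$ is $G$-invariant, so $()^{c_R}$ is $G$-invariant.

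The one genuinely delicate point — the ``main obstacle'' — is the interplay between \emph{connectedness} and the intersection: a priori $\bigcap Z$ need not be connected, and the condition defining $Y$ quantifies over paths inside a \emph{connected} graph, so one must be careful that the path $q$ produced above does not merely live in $\bigcap Z$ but actually in the single component $\Theta$. This is exactly handled by the observation, already recorded in the Preliminaries, that any connected subgraph contained in all members of $Z$ sits inside one component of $\bigcap Z$; since $q$ is connected and meets $\Theta$, it lands in $\Theta$. Everything else — extensivity, monotonicity, idempotency — is delivered automatically by the cited characterization of closure operators on rmlb-complete posets, so no separate verification is needed. For the final sentence, one only needs that $R_S := \{(u,v) : u_S = v_S\}$ is symmetric (clear) and that $u_S = v_S$ implies $u_G = v_G$ (apply $\sigma^\sharp$), so the general statement applies verbatim and gives $c_S = ()^{c_{R_S}}$.
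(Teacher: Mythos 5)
Your argument for the stated claim is correct and follows essentially the same route as the paper: verify that the declared family of closed graphs satisfies the fixed-point criterion for closure operators on the rmlb-complete poset $(\csub\Gamma_X,\subseteq)$, namely that it contains $\Gamma_X$ and the connected components of intersections of closed graphs, and then read off $G$-invariance from the $G$-invariance of the closed family. In fact you supply more detail than the paper on the one step it leaves as an ``observe'': that a component $\Theta$ of $\bigcap Z$ is closed because, by determinism, the $v$-labeled path from $g$ is the same path in every $\Delta\in Z$, hence lies in $\bigcap Z$, and being connected and meeting $\Theta$ it lies in $\Theta$. That is exactly the right justification. The one substantive divergence is coverage: the paper's proof of this lemma also establishes that $()^{c_R}$ is \emph{finitary} (via the argument that $\bigcup\{F^{c_R}: F\subseteq\Delta \text{ finite}\}$ is itself closed, using that a finite path meeting several $F_i$ is contained in $(F_1\cup\cdots\cup F_n)^{c_R}$ for a connected finite union). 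Finitariness is not asserted in the lemma's statement, so your proposal is not wrong to omit it, but it is clearly part of what the lemma is meant to deliver --- it is invoked later (e.g.\ in Proposition \ref{prop:mutually-inverse-E} and Theorem \ref{Thm:catequivalent}, whose categories consist of finitary operators) --- so if you are proving this lemma as the paper uses it, you should add that paragraph.
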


\begin{proof}
	To see that $()^{c_R}$ is indeed a well-defined closure operator on $(\csub \Gamma_X, \subseteq)$, observe that for any set $Y$ of subgraphs defined as closed, the connected components of $\bigcap Y$ are also closed. Furthermore $\Gamma_X$ itself is closed, since any pair of words in $R$ label coterminal paths in $\Gamma_X$ from any vertex. $G$-invariance is also clear as the set of closed sets is invariant under the left action of $G$ by definition. 
	
	To see that $()^{c_R}$ is finitary, take $\Delta \in \csub\Gamma_X$ and consider the set $$\ol \Delta=\bigcup \{F^{c_R}: F \subseteq \Delta \hbox{ is finite}\},$$ we need $\ol \Delta=\Delta^{c_R}$. We first show that $\ol \Delta$ is closed: choose $(u, v) \in R$, then indeed if $\ol \Delta$ contains a path $p$ labeled by $u$, as $\langle p \rangle$ is finite, there are finite subsets $F_1, \ldots, F_n \subseteq \Delta$ with $\langle p \rangle \subseteq F_1^{c_R} \cup \cdots \cup F_n^{c_R}$. If $\langle p \rangle$ intersects each graph $F_i$, which we can assume without loss of generality, then $F_1 \cup \cdots \cup F_n$ is connected and 
	$$\langle p \rangle \subseteq F_1^{c_R} \cup \cdots \cup F_n^{c_R}\subseteq (F_1 \cup \cdots \cup F_n)^{c_R}.$$ But since $(F_1 \cup \cdots \cup F_n)^{c_R}$ is closed, it contains the coterminal path labeled by $v$. Hence $\ol \Delta$ is a closed, and as it contains $\Delta$, we have $\ol \Delta \supseteq \Delta^{c_R}$. Conversely, if $F \subseteq \Delta$, then $F^{c_R} \subseteq \Delta^{c_R}$, hence $\ol \Delta \subseteq \Delta^{c_R}$, proving equality.
	\end{proof}

The definition of $()^{c_S}$ in the $E$-unitary case is obviously inspired by Stephen's procedure. There is a natural analogue for $F$-inverse monoids: 

\begin{Def}[Closure operators from $F$-inverse monoids]
	\label{def:F-inverse-yields-closure}

	Given an $X$-generated group $G$ and a symmetric relation $R$ on $\mathbb I\m_X$ where any pair of $R$-related terms are equal in $G$, we define a closure operator on $\sub\Gamma_X$
	by its set of closed graphs: we define $\Delta \in \sub\Gamma_X$ to be closed whenever for any pair $(u,v) \in R$, and any pair of vertices $g,h \in V(\Delta)$, $u$ labels a journey from $g$ to $h$ in $\Delta$ if and only if $v$ does. 
	
	In particular, any $X$-generated $F$-inverse monoid $S$ with greatest group image $G$ gives rise to such a relation $R$, consisting of pairs $(u,v)$ with $u_S=v_S$ -- we denote the corresponding closure operator by $c_S$. 	
\end{Def}

We can state a lemma analogous to Lemma \ref{Lem:S-yields-closure-E-unit}, the proof is nearly verbatim the same with paths replaced by journeys, so we omit it.

\begin{Lem}
	\label{Lem:S-yields-closure-F-inv}
	The operator $()^{c_R}$ is a $G$-invariant closure operator on $(\sub \Gamma_X, \subseteq)$.
	In particular, for any $X$-generated $F$-inverse monoid $S$, the operator $()^{c_S}$ is a $G$-invariant closure operator on $(\sub \Gamma_X, \subseteq)$. 

\end{Lem}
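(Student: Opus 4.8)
The plan is to mimic, step by step, the proof of Lemma \ref{Lem:S-yields-closure-E-unit}, replacing paths by journeys throughout. First I would verify that $()^{c_R}$ is a well-defined closure operator on $(\sub\Gamma_X,\subseteq)$. By the discussion of closure operators on complete lattices, it suffices to check that the family $Y$ of graphs declared closed is intersection-closed and contains the maximum element $\Gamma_X$ of $\sub\Gamma_X$. For intersection-closedness: if $\{\Delta_i\}$ are all closed and $(u,v)\in R$, then a journey labeled $u$ from $g$ to $h$ inside $\bigcap_i\Delta_i$ is in particular a journey in each $\Delta_i$, so each $\Delta_i$ contains the $u$-to-$v$ replacement journey from $g$ to $h$; but a journey is determined in the deterministic graph $\Gamma_X$ by its starting vertex and any labeling term (the relevant fact quoted in the subsection on journeys: $\ol w_1=\ol w_2$ whenever $(w_1)_S=(w_2)_S$, and more basically the journey $g\ol v$ is uniquely determined), so the same journey labeled $v$ lies in $\bigcap_i\Delta_i$. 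Here, unlike for $\csub$, we do not need to pass to connected components, which actually simplifies the argument. That $\Gamma_X$ is closed follows because any $(u,v)\in R$ have $u_G=v_G$, hence the journeys $g\ol u$ and $g\ol v$ are coterminal in $\Gamma_X$ for every vertex $g$, so each exists in $\Gamma_X$. $G$-invariance is immediate since the defining condition ``$u$ labels a journey from $g$ to $h$ in $\Delta$'' is transported by the left $G$-action to ``$u$ labels a journey from $kg$ to $kh$ in $k\Delta$'', so the set of closed graphs is $G$-invariant, and therefore $(k\Delta)^{c_R}=k(\Delta^{c_R})$.

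Next I would show $()^{c_R}$ is finitary, which is the analogue of the second half of the proof of Lemma \ref{Lem:S-yields-closure-E-unit} and is the only place where some care is needed. Set $\ol\Delta=\bigcup\{F^{c_R}\colon F\subseteq\Delta\text{ finite}\}$; since each $F^{c_R}\subseteq\Delta^{c_R}$ we trivially get $\ol\Delta\subseteq\Delta^{c_R}$, so it remains to show $\ol\Delta$ is closed (then extensivity gives $\Delta\subseteq\ol\Delta$ and hence $\Delta^{c_R}\subseteq\ol\Delta$). Take $(u,v)\in R$ and suppose $\ol\Delta$ contains a journey $j=(p_1,\dots,p_n)$ labeled $u$ from $g$ to $h$. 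The graph $\langle j\rangle=\langle p_1\rangle\cup\cdots\cup\langle p_n\rangle$ is finite, so it is covered by finitely many $F_i^{c_R}$ with $F_i\subseteq\Delta$ finite; discarding those $F_i$ with $\langle j\rangle\cap F_i=\emptyset$ we may assume each $F_i$ meets $\langle j\rangle$. The difference from the $E$-unitary case is that $F:=F_1\cup\cdots\cup F_m$ need not be connected — but in $\sub\Gamma_X$ connectedness is not required, $F$ is a legitimate finite subgraph of $\Delta$, and $F^{c_R}$ is defined. I would then argue $F_1^{c_R}\cup\cdots\cup F_m^{c_R}\subseteq F^{c_R}$ by monotonicity ($F_i\subseteq F$ gives $F_i^{c_R}\subseteq F^{c_R}$), so $\langle j\rangle\subseteq F^{c_R}$, hence $j$ is a journey in the closed graph $F^{c_R}$, which therefore contains the coterminal replacement journey labeled $v$; since $F^{c_R}\subseteq\ol\Delta$, that journey lies in $\ol\Delta$. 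Thus $\ol\Delta$ is closed.

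Finally, the ``in particular'' statement is immediate: for an $X$-generated $F$-inverse monoid $S$ with greatest group image $G$, the relation $R=\{(u,v)\in\mathbb I\m_X\times\mathbb I\m_X\colon u_S=v_S\}$ is symmetric, and $u_S=v_S$ implies $u_G=v_G$ since the canonical map $S\to G$ identifies $^\m$-terms appropriately, so $R$ satisfies the hypothesis of Definition \ref{def:F-inverse-yields-closure} and $c_S=c_R$ is a $G$-invariant (finitary) closure operator on $(\sub\Gamma_X,\subseteq)$.

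I expect the main obstacle to be purely bookkeeping: making sure that the uniqueness-of-journeys fact used for intersection-closedness is cited correctly (a journey in a deterministic $X$-graph is determined by its initial vertex together with any $\mathbb I\m_X$-term labeling it, once one knows the underlying $G$-values of the jumps agree), and that in the finitary step one genuinely exploits that $\sub\Gamma_X$ — unlike $\csub\Gamma_X$ — imposes no connectedness constraint, so the union $F_1\cup\cdots\cup F_m$ can be used directly without the connected-components workaround needed in Lemma \ref{Lem:S-yields-closure-E-unit}.
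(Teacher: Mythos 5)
Your proposal is correct and follows essentially the same route as the paper, which in fact omits this proof entirely on the grounds that it is the proof of Lemma \ref{Lem:S-yields-closure-E-unit} with paths replaced by journeys. You have filled in that omitted argument faithfully, and you correctly identify the one genuine (minor) divergence: in $\sub \Gamma_X$ the finitary step needs no connectedness workaround, since $F_1\cup\cdots\cup F_m$ is already an admissible finite subgraph.
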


Observe that for a closure operator $()^{c_S}$ associated to an $E$-unitary inverse monoid $S$, its compact sets are exactly the embedded copies of the Schützenberger graphs of $S$.
Indeed, any compact set is of the from $F^{c_S}$ with $F \in \csub \Gamma_X$ finite, and if $p$ is any path spanning $F$, then $F^{c_S}$ is isomorphic to $S\Gamma(l(p))$ by Proposition \ref{prop:stephen_main}.

This naturally gives rise to the idea of considering the compact sets in the $F$-inverse case to be $F$-inverse analogues of Schützenberger graphs.

\subsection{$F$-Sch\"utzenberger graphs of $F$-inverse monoids}
\label{subset:F-Sch-graph}

\begin{Def}[$F$-Schützenberger graphs of $F$-inverse monoids]
	\label{def:F-Sch-graph}
	Given an $X$-generated $F$-inverse monoid $S$, we define the $F$-Schützenberger graph $F\Gamma(w)$ of a term $w \in \mathbb I\m_X$ to be the subgraph $\langle \ol w \rangle^{c_S}$ of the Cayley graph of its greatest group image. For $s \in S$, we define the $F$-Schützenberger graph $F\Gamma(s)$ of $s$ to be $F\Gamma(w)$ for any word $w$ representing $s$.
\end{Def}

Note that $F\Gamma(s)$ is well-defined: if $w_S=u_S$ for some $w,u \in \mathbb I\m_X$, then $\langle \ol u \rangle  \subseteq \langle \ol w \rangle^{c_S}$ and $\langle \ol w \rangle  \subseteq \langle \ol u \rangle^{c_S}$ by Definition \ref{def:F-inverse-yields-closure} which yields $\langle \ol u \rangle^{c_S}  = \langle \ol w \rangle^{c_S}$.

We can define the obvious analogues of $P$-expansions and full $P$-expansions.

\begin{Def}[$P$-expansions for $F$-inverse monoids]
Let $S$ be an $X$-generated $F$-inverse monoid, and let $\Gamma_X$ be the Cayley graph of its greatest group image. Let $R$ be symmetric relation on $\mathbb I\m_X$ where any pair of $R$-related terms are $G$-equivalent. Given a subgraph $\Delta \in \csub \Gamma_X$, two $R$-related terms $u,v \in \mathbb I\m_X$, and a journey in $\Delta$ labeled by $u$ such that the coterminal journey in $\Gamma_X$ labeled by $v$ is not in $\Delta$, a \emph{$P$-expansion} on $\Delta$ is the operation of adding the subgraph spanned by this journey to $\Delta$.

The \emph{full $P$-expansion} of $\Delta$ is the graph obtained from $\Delta$ by performing all possible $P$-expansions on $\Delta$ simultaneously, i.e. for any journey $u$ with the properties above, we add the coterminal journey $v$.
\end{Def}

\begin{Prop}
\label{prop:FP-expansions}
For a term $w \in \mathbb I\m_X$, put $\Delta_0=\langle \ol w \rangle$, and for $i \geq 1$, let $\Delta_{i+1}$ be the full $P$-expansion of $\Delta_i$ with respect to the relation $R$. Then $\langle \ol w \rangle^{c_R}=\bigcup_{i=1}^\infty \Delta_i$. 
\end{Prop}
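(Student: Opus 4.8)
The plan is to show the two inclusions $\langle \ol w \rangle^{c_R} \supseteq \bigcup_{i} \Delta_i$ and $\langle \ol w \rangle^{c_R} \subseteq \bigcup_{i} \Delta_i$ separately, mirroring the structure of the proof of Lemma~\ref{Lem:S-yields-closure-F-inv} but with the added bookkeeping needed to control the iterated expansion. Note that each $\Delta_i$ is connected: $\Delta_0=\langle \ol w\rangle$ is connected since $\ol w$ is a journey (a $P$-expansion adds a journey $v$ coterminal with a journey $u$ already in $\Delta$, and $\langle v\rangle$ is connected and shares the endpoints $g,h$ with $\Delta$), so the union $\bigcup_i \Delta_i$ lies in $\csub\Gamma_X$.

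For the inclusion $\bigcup_i \Delta_i \subseteq \langle \ol w \rangle^{c_R}$, I would argue by induction on $i$ that $\Delta_i \subseteq \langle \ol w\rangle^{c_R}$. The base case is immediate. For the inductive step, suppose $\Delta_i \subseteq \langle \ol w\rangle^{c_R}$ and let $\langle g\ol v\rangle$ be one of the journey-spans added in forming $\Delta_{i+1}$, arising from a pair $(u,v)\in R$ and a journey in $\Delta_i$ labeled $u$ from $g$ to $h$. Since $\Delta_i \subseteq \langle \ol w\rangle^{c_R}$, that $u$-journey lies in $\langle \ol w\rangle^{c_R}$; since $\langle \ol w\rangle^{c_R}$ is closed (being a fixed point of the closure operator) and $g,h\in V(\langle \ol w\rangle^{c_R})$, it contains the coterminal $v$-journey, hence contains $\langle g\ol v\rangle$. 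Taking the union over all such expansions gives $\Delta_{i+1}\subseteq\langle\ol w\rangle^{c_R}$, and then $\bigcup_i\Delta_i\subseteq\langle\ol w\rangle^{c_R}$.

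For the reverse inclusion, the strategy is to show that $\Delta_\infty := \bigcup_i \Delta_i$ is itself closed; since it contains $\langle \ol w\rangle$, this forces $\langle\ol w\rangle^{c_R}\subseteq\Delta_\infty$. So take $(u,v)\in R$ and a journey $j$ in $\Delta_\infty$ labeled by $u$ from $g$ to $h$; I must produce the coterminal $v$-journey inside $\Delta_\infty$. The key finiteness observation is that $j$ consists of finitely many paths, each using finitely many edges, so there is some index $N$ with $\langle j\rangle \subseteq \Delta_N$ — here one uses that the $\Delta_i$ form an ascending chain, so a finite subgraph contained in the union is contained in some single $\Delta_N$. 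Moreover $g,h\in V(\Delta_N)$. Now $j$ is a journey in $\Delta_N$ labeled $u$; if the coterminal $v$-journey is already in $\Delta_N$ we are done, and otherwise the definition of full $P$-expansion guarantees that the coterminal $v$-journey (specifically $g\ol v$, which indeed ends at $h$ since $u_G=v_G$) is added when passing to $\Delta_{N+1}$. Either way the $v$-journey lies in $\Delta_{N+1}\subseteq\Delta_\infty$, so $\Delta_\infty$ is closed.

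The main obstacle is the finiteness reduction in the last paragraph: one needs the precise fact that a finite subgraph contained in an increasing union of graphs sits inside one member of the chain, together with the care that the full $P$-expansion is defined so as to add \emph{all} coterminal $v$-journeys for \emph{all} witnessing $u$-journeys simultaneously — so that once $\langle j\rangle$ and the endpoints appear at stage $N$, the required $v$-journey is guaranteed at stage $N+1$ rather than only "eventually". A minor subtlety worth a sentence is checking that $g\ol v$ really does terminate at $h$: this follows because $(u,v)\in R$ implies $u_G=v_G$, so the $G$-endpoint of a $u$-journey from $g$ equals that of the $v$-journey from $g$, namely $g u_G = g v_G = h$. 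With these points addressed, combining the two inclusions yields $\langle\ol w\rangle^{c_R}=\bigcup_{i=1}^\infty\Delta_i$.
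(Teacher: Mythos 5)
Your proof is correct and takes essentially the same approach as the paper's: one inclusion by induction using that $\langle \ol w\rangle^{c_R}$ is closed, the other by showing the union is closed via the observation that a journey, being a finite subgraph, already sits inside some $\Delta_N$ of the increasing chain. One side remark is false but harmless: $\langle \ol w\rangle$ and the $\Delta_i$ need \emph{not} be connected, since the span of a journey is a union of path-spans separated by jumps; this does not affect the argument, because the closure operator relevant here is defined on $\sub\Gamma_X$ rather than $\csub\Gamma_X$, so no connectivity is required.
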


\begin{proof}
On the one hand, observe that $\bigcup_{i=1}^\infty \Delta_i$ is closed: if $u,v \in \mathbb I\m_X$ with $(u,v)\in R$, then any journey labeled by $u$, contained in $\bigcup_{i=1}^\infty \Delta_i$ is contained in some $\Delta_i$, and its `coterminal pair' labeled by $v$ will be contained in $\Delta_{i+1}$ by definition. Since $\langle \ol w \rangle \subseteq \bigcup_{i=1}^\infty \Delta_i$, this implies $\langle \ol w \rangle^{c_R} \subseteq \bigcup_{i=1}^\infty \Delta_i$. Conversely, $\Delta_i \subseteq \Delta_{i-1}^{c_R}$ for any $i$, and so $\Delta_i \subseteq \langle \ol w \rangle^{c_R}$ follows by an inductive argument, hence $\bigcup_{i=1}^\infty \Delta_i \subseteq \langle \ol w \rangle^{c_R}$. 
\end{proof}

We will explore how $P$-expansions relate to presentations on Section \ref{Sec:presentations}. For now, we prove the analogue of Proposition \ref{prop:stephen_main} (\ref{item:stephen_main_language}) for $F$-Schützenberger graphs. The following lemma is crucial, and is proved in greater generality in \cite[Lemma 4.6]{AKSz}.

\begin{Lem}\label{Lem:4.6}
If $u,v \in \mathbb I\m_X$ label coterminal journeys in $\Gamma_X$ such that $\langle u \rangle \subseteq \langle v \rangle$, then $u_S \geq v_S$.
\end{Lem}

\begin{Prop}
	\label{prop:language-of-F-Sch}
Let $S$ be an $X$-generated $F$-inverse monoid, and consider an $F$-Schützenberger graph $F\Gamma(w)$ for some $w \in \mathbb I\m_X$. Then $u \in \mathbb I\m_X$ labels a journey in $F\Gamma(w)$ from $1$ to $w_G$ if and only if $u_S \geq w_S$.
\end{Prop}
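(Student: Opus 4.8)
The plan is to prove the two directions separately, using Proposition~\ref{prop:FP-expansions} to write $F\Gamma(w)=\bigcup_{i=0}^\infty \Delta_i$ with $\Delta_0=\langle \ol w\rangle$ and $\Delta_{i+1}$ the full $P$-expansion of $\Delta_i$ with respect to the relation $R=\{(u,v)\colon u_S=v_S\}$ giving $c_S$. For the ``only if'' direction, suppose $u\in\mathbb I\m_X$ labels a journey $j$ in $F\Gamma(w)$ from $1$ to $w_G$. Since $\ol w$ also labels a journey in $F\Gamma(w)$ from $1$ to $w_G$ (as $\langle\ol w\rangle\subseteq F\Gamma(w)$), and $F\Gamma(w)=\langle\ol w\rangle^{c_S}$ is closed, I want to conclude $u_S\geq w_S$. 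The natural tool is Lemma~\ref{Lem:4.6}: it would suffice to produce a term $u'\in\mathbb I\m_X$ with $u'_S=u_S$, labelling a journey coterminal with $\ol w$, and with $\langle u'\rangle\subseteq\langle\ol w\rangle$. However $u$ itself need not have its spanned subgraph inside $\langle\ol w\rangle$ — it only lies inside $F\Gamma(w)$. So instead I would argue as follows: the journey $j$ spanned by $u$ uses only finitely many edges, hence lies in some $\Delta_i$; I then induct on $i$ to show that for any term $u$ labelling a journey in $\Delta_i$ from some vertex $g$ to some vertex $h$, there is a term labelling a journey in $\langle\ol w\rangle$ from $g$ to $h$ with the same $S$-value as $u$ — wait, this is false since $\langle\ol w\rangle$ may not contain $g,h$ at all. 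The correct inductive claim is: \emph{the language $L_i(g,h)=\{u_S\colon u\text{ labels a journey from }g\text{ to }h\text{ in }\Delta_i\}$ stabilizes and} $\{u_S : u \text{ labels a journey from }1\text{ to }w_G\text{ in }\Delta_i\}\subseteq\{s\in S: s\geq w_S\}$.

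Concretely, for the base case $\Delta_0=\langle\ol w\rangle$: if $u$ labels a journey from $1$ to $w_G$ in $\langle\ol w\rangle$, then $\langle u\rangle\subseteq\langle\ol w\rangle=\langle w\rangle$, so by Lemma~\ref{Lem:4.6} we get $u_S\geq w_S$, which also shows $\ol w$ realizes the minimum $w_S$ among journeys $1\to w_G$ in $\Delta_0$. For the inductive step, a $P$-expansion adds a subgraph $\langle g\ol v\rangle$ where $v$ is $R$-related to some $u$ labelling a journey $g\to h$ already in $\Delta_i$, so by induction $v_S=u_S$ is in the ``already achieved'' language at the vertex pair $(g,h)$; the key point is that any new journey in $\Delta_{i+1}$ from $1$ to $w_G$ can be ``rerouted'' through $\Delta_i$ with the same or smaller $S$-value, by replacing each traversal of an edge from a newly-added $\langle g\ol v\rangle$ by the corresponding $u$-journey in $\Delta_i$ — since $u_S=v_S$, rerouting does not change the $S$-value when we swap a full $v$-subjourney, and replacing a partial traversal only increases the value in the natural partial order (a sub-journey spans a subgraph, so Lemma~\ref{Lem:4.6} again applies). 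This gives the inclusion $\{u_S : u\text{ labels a journey }1\to w_G\text{ in }F\Gamma(w)\}\subseteq\{s\geq w_S\}$.

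For the ``if'' direction, suppose $u_S\geq w_S$. I need to exhibit a journey in $F\Gamma(w)$ labelled by some term of $S$-value $u_S$; since $F\Gamma(w)=\langle\ol w\rangle^{c_S}$ depends only on $w_S$ (it is well-defined by the note after Definition~\ref{def:F-Sch-graph}), it is enough to find a single such term. Write $u=u_0v_1^\m u_1\cdots v_n^\m u_n$. I would use the relation $R$ and closedness directly: consider the term $w'=w u^{-1}u$, which in $S$ equals $w_S (u_S)^{-1}u_S$; since $u_S\geq w_S$, we have $u_S (u_S)^{-1}\geq w_S w_S^{-1}$... actually the cleanest route is: $u_S\geq w_S$ means $w_S = u_S\cdot(w_S w_S^{-1})$, equivalently $w_S = u_S f$ for the idempotent $f=w_S w_S^{-1}$, and $f\leq u_S^{-1}u_S$. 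Then consider the term $v = u\,(u^{-1}w)^\m\cdot$\,(a word spelling $w$): more carefully, since $u_S\geq w_S$ the term $u\cdot(u^{-1}w)$ has the same $S$-value as $w$ (as $u_S(u_S^{-1}w_S)=w_S w_S^{-1}w_S=w_S$), so $(u(u^{-1}w),\,w)\in R$ — here I read $u^{-1}w$ as the concatenation of terms in $\mathbb I\m_X$. Since $\langle \ol w\rangle^{c_S}$ is closed and contains the journey $\ol w$ from $1$ to $w_G$, it must contain a journey from $1$ to $w_G$ labelled by $u(u^{-1}w)$. The initial segment of that journey labelled by $u$ is then a journey in $F\Gamma(w)$ from $1$ to $u_G=w_G$ (using $u_G=w_G$, which holds since $u_S\geq w_S$ forces $u\sigma=w\sigma$), with label of $S$-value $u_S$, as required.

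The main obstacle I anticipate is making the ``rerouting'' argument in the ``only if'' direction fully rigorous: one must track how a journey in $\Delta_{i+1}$ decomposes into maximal segments lying in $\Delta_i$ versus segments inside freshly attached $\langle g\ol v\rangle$ pieces, handle the jumps of the journey (which may start or land inside a new piece), and verify that each replacement step is monotone in the natural partial order. The technical engine for every monotonicity step is Lemma~\ref{Lem:4.6}, so the write-up should isolate a small lemma of the form ``if a journey $j'$ is obtained from $j$ by replacing a coterminal sub-journey $j_1$ by $j_1'$ with $\langle j_1\rangle\subseteq\langle j_1'\rangle$, then $l(j')_S\geq l(j)_S$,'' and then apply it finitely many times. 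The ``if'' direction, by contrast, should be short once one observes that closedness of $F\Gamma(w)$ applied to the pair $(u(u^{-1}w),w)\in R$ immediately produces the desired journey.
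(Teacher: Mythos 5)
Your ``if'' direction is correct and is essentially the paper's argument (the paper uses the term $uw^{-1}w$ where you use $uu^{-1}w$; both have $S$-value $w_S$, both are $R$-related to $w$, and the prefix argument is identical). The base case of your ``only if'' direction is also the paper's. The gap is in your inductive step. Your plan is to take an arbitrary $1$-to-$w_G$ journey $j$ in $\Delta_{i+1}$ and ``reroute'' it through $\Delta_i$ so that the inductive hypothesis applies to the rerouted journey. This cannot work as described: a newly attached piece $\langle g\ol v\rangle$ may contain vertices and edges lying outside $\Delta_i$ (and, since we work in $\sub \Gamma_X$ rather than $\csub \Gamma_X$, the piece need not be graph-theoretically attached to $\Delta_i$ anywhere), and $j$ may jump into the interior of such a piece, traverse one edge, and jump out. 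Any journey coterminal with such a sub-journey still has endpoints outside $\Delta_i$, so the ``rerouted'' journey is not a journey \emph{in} $\Delta_i$ and the inductive hypothesis says nothing about it. Your auxiliary monotonicity lemma is also stated backwards: by Lemma~\ref{Lem:4.6} and compatibility of $\leq$ with multiplication, replacing a coterminal sub-journey by one spanning \emph{more} gives $l(j')_S\leq l(j)_S$, not $\geq$; and in any case the direction you would need for the chain $l(j)_S\geq l(j')_S\geq w_S$ is never established.

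The repair --- and the paper's actual argument --- is to leave the given journey $u$ untouched. It suffices to exhibit \emph{one} journey $j^*$ from $1$ to $w_G$ spanning all of $\Delta$ with $l(j^*)_S\geq w_S$; then $\langle\ol u\rangle\subseteq\Delta=\langle j^*\rangle$ and a single application of Lemma~\ref{Lem:4.6} yields $u_S\geq l(j^*)_S\geq w_S$, with no decomposition of $u$ required. Such a $j^*$ is built along the induction: take a spanning $1$-to-$w_G$ journey $j=j_1j_2$ of $\Delta^\circ$ with $\omega(j_1)=\alpha(q)$, where $q$ is the journey in $\Delta^\circ$ inducing the expansion and $p$ the journey added; then $j_1qq^{-1}j_2$ is a $1$-to-$w_G$ journey in $\Delta^\circ$, so $l(j_1qq^{-1}j_2)_S\geq w_S$ by the inductive hypothesis, and since $l(p)_S=l(q)_S$ the journey $j_1pp^{-1}j_2$ has the same $S$-value while spanning $\Delta$. (A further minor point: the paper inducts on the number of \emph{elementary} $P$-expansions needed to cover $\langle\ol u\rangle$, which is finite because $\langle\ol u\rangle$ is; your induction on full $P$-expansion stages would need the same finiteness observation to isolate finitely many new pieces per stage.)
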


\begin{proof}
First note that if $u_S \geq w_S$, that is, if $(u w^{-1} w)_S=w_S$, then $F\Gamma(w)= \langle \ol w \rangle^{c_S}$ contains the journey of $\Gamma_X$ labeled by $u w^{-1} w$, starting at $1$. Since $u_S \geq w_S$ also implies $u_G=w_G$, the prefix of the journey above labeled by $u$ is a $1$ to $w_G$ journey, proving the right to left implication.

For the converse, by Proposition \ref{prop:FP-expansions}, if $u$ labels a journey in $F\Gamma(w)= \langle \ol w \rangle^{c_S}$, then $u$ is contained in a graph $\Delta$ obtained from $\langle \ol w \rangle$ by a finite number of elementary $P$-expansions with respect to the relation $R$ consisting of pairs representing equal elements in $S$. We prove by induction on the number of elementary $P$-expansions needed to construct $\Delta$ that if $u$ labels a $1$ to $w_G$ journey in $\Delta$, then $u_S \geq w_S$.

If $\Delta=\langle \ol w \rangle$, then the claim follows from Lemma \ref{Lem:4.6}. For the inductive step, assume $\Delta$ is constructed from $\Delta^\circ$ by an elementary $P$-expansion, and the claim holds for $\Delta^\circ$. Denote the journey added to $\Delta^\circ$ in the $P$-expansion by $p$, and the coterminal journey of $\Delta^\circ$ inducing the expansion by $q$, so $l(p)_S=l(q)_S$. Assume that $u$ labels a $1$ to $w_G$ journey in $\Delta$, we need to show $u_S \geq w_S$.
Let $j$ be a journey in $\Delta^\circ$ from $1$ to $w_G$ which spans $\Delta^\circ$ (such a journey can easily be constructed by exhausting all edges in an alternating sequence of edges and jumps).  Since $j$ traverses the vertex $\alpha(q)$, we can factor $j$ as $j_1j_2$ where $\omega(j_1)=\alpha(q)$. Then $j_1 qq^{-1} j_2$ is also a journey from $1$ to $w_G$ spanning $\Delta^\circ$, and by the inductive hypothesis, we have $l(j_1qq^{-1}j_2)_S \geq w_S$. But since $l(p)_S=l(q)_S$, we also have $l(j_1pp^{-1}j_2)_S=l(j_1qq^{-1}j_2)_S\geq w_S$, where $\langle j_1pp^{-1}j_2 \rangle =\Delta$, in particular $\langle \ol u \rangle \subseteq \langle j_1pp^{-1}j_2 \rangle$ and $\ol u$ and $j_1pp^{-1}j_2$ are both $1$ to $w_G$ journeys. Applying Lemma \ref{Lem:4.6} again, we have $u_S \geq  l(j_1pp^{-1}j_2)_S \geq w_S$.
\end{proof}

\section{The correspondence between closure operators, and $E$-unitary and $F$-inverse monoids}

In this section, we prove that for any $X$-generated group $G$ with Cayley graph by $\Gamma_X$, the category of $G$-invariant, finitary closure operators on $(\csub \Gamma_X, \subseteq)$, respectively on $(\sub \Gamma_X, \subseteq)$, is equivalent to the category of $X$-generated $E$-unitary, respectively $F$-inverse monoids with greatest group image $G$.

\subsection{Inverse monoids associated to closure operators.}
\begin{Def}[Inverse monoids from closure operators]
	\label{def:inverse-from-closure}
Given an $X$-generated group $G$ with Cayley graph $\Gamma_X$, and a $G$-invariant closure operator $()^c$ on $(P, \subseteq)$ where $P$ is either $\sub \Gamma_X$ or $\csub \Gamma_X$, we can define an inverse monoid $S_c$ where
$$S_c=\{(\Delta, g): g \in G, \Delta \in P \hbox{ is a compact subgraph containing } 1 \hbox{ and } g \}$$
and the operations are
$$(\Delta, g)(\Xi, h)=((\Delta \cup g\Xi)^c, gh)$$
and 
$$(\Delta, g)^{-1}=(g^{-1}\Delta, g^{-1}).$$
\end{Def}

\begin{Lem}
\label{Lem:closure-operator-yields-S}
With the above operations, $S_c$ forms an $E$-unitary inverse monoid with greatest group image $G$. The natural partial order is defined by $(\Delta, g) \leq (\Xi, h)$ whenever $g=h$ and $\Xi \subseteq \Delta$. Furthermore, 
\begin{enumerate}
	\item \label{item:closure-E} if $P=\csub \Gamma_X$, then $S_c$ is $X$-generated in the variety of inverse monoids, and the Sch\"utzenberger graph of $(\Delta, g)$ is isomorphic to $\Delta$. 
	\item \label{item:closure-F} if $P=\sub \Gamma_X$, then $S_c$ is an $F$-inverse monoid with $(\Delta, g)^\m=(\{1,g\}^c, g)$, and is $X$-generated in the variety of $F$-inverse monoids, and the $F$-Sch\"utzenberger graph of $(\Delta, g)$ is equal to $\Delta$.
\end{enumerate}
	
\end{Lem}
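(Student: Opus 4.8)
The plan is to verify the claims in Lemma~\ref{Lem:closure-operator-yields-S} in four stages: first that $S_c$ is a well-defined inverse monoid, then that its greatest group image is $G$ and it is $E$-unitary, then the description of the natural partial order, and finally the two signature-specific claims~(\ref{item:closure-E}) and~(\ref{item:closure-F}).

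First I would check that the operations are well-defined and that $S_c$ is a monoid. Since $()^c$ is extensive and $1, g \in \Delta$, $1, h \in \Xi$ imply $1, gh \in \Delta \cup g\Xi \subseteq (\Delta \cup g\Xi)^c$, and since $\Delta \cup g\Xi$ is a finite (hence compact-generating) subgraph, $(\Delta\cup g\Xi)^c$ is compact; when $P = \csub\Gamma_X$ one notes $\Delta \cup g\Xi$ is connected because both pieces contain the common vertex $g$. Associativity reduces to the identity $((\Delta \cup g\Xi)^c \cup g h\Theta)^c = (\Delta \cup g(\Xi \cup h\Theta)^c)^c$, which follows from $G$-invariance (to pull $g$ and $gh$ outside the closure), monotonicity, and idempotency of $()^c$ -- the standard fact that for a closure operator $(A \cup B^c)^c = (A\cup B)^c$. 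The identity element is $(\{1\}^c, 1)$, and $(g^{-1}\Delta, g^{-1})$ is a genuine inverse: one checks $(\Delta,g)(g^{-1}\Delta,g^{-1})(\Delta,g) = (\Delta, g)$ using $(\Delta \cup gg^{-1}\Delta)^c = \Delta^c = \Delta$ (as $\Delta$ is closed, being compact), and that idempotents $(\Delta, 1)$ commute, which is immediate since $(\Delta \cup \Xi)^c = (\Xi \cup \Delta)^c$. Thus $S_c$ is an inverse monoid.

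Next, the map $(\Delta, g) \mapsto g$ is clearly a surjective monoid morphism onto $G$; its kernel (preimage of $1$) is $\{(\Delta,1)\}$, precisely the idempotents, so the morphism is idempotent-pure and hence $S_c$ is $E$-unitary with greatest group image $G$ (the morphism is the greatest group morphism because $G$ is generated by the images of the generators, once we establish $X$-generation). For the natural partial order: $(\Delta,g) \leq (\Xi,h)$ means $(\Delta,g) = (\Xi,h)(\Delta',1)$ for some idempotent, i.e. $h = g$ and $\Delta = (\Xi \cup \Delta')^c \supseteq \Xi$; conversely if $g = h$ and $\Xi \subseteq \Delta$ then $(\Delta,g) = (\Xi,g)(\Delta,1)$ since $(\Xi \cup \Delta)^c = \Delta^c = \Delta$. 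So the order is as stated.

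For~(\ref{item:closure-E}): when $P = \csub\Gamma_X$, $S_c$ is generated as an inverse monoid by the elements $\iota(x) = (\langle \ol x\rangle^c, x_G)$ for $x \in X$ -- one shows by induction that any word $w \in (X\cup X^{-1})^*$ evaluates to $(\langle \ol w\rangle^c, w_G)$, using that $\langle \ol w \rangle$ is obtained by concatenating the edges traversed, and closures interact with unions as above; since every compact connected $\Delta$ containing $1$ is $\langle \ol w\rangle^c$ for a suitable $w$ spanning it, $\iota(X)$ generates. The Sch\"utzenberger graph of $(\Delta,g)$ has vertex set the $\rrel$-class, which by the partial order description is $\{h \in G : (\Delta, h) \in S_c \text{ via some subgraph}\}$; identifying this with $V(\Delta)$ and checking edges match gives the stated isomorphism -- Proposition~\ref{prop:stephen_main}(\ref{item:stephen_main_Sgraph}) can alternatively be invoked since closed graphs are exactly the Schützenberger-graph-like objects. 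For~(\ref{item:closure-F}): when $P = \sub\Gamma_X$, I would verify $(\{1,g\}^c, g)$ is the greatest element of the $\sigma$-class of $(\Delta,g)$, i.e. the largest under $\leq$ among pairs with second coordinate $g$, which holds because $\{1,g\} \subseteq \Delta$ forces $\{1,g\}^c \subseteq \Delta^c = \Delta$; that $^\m$ so defined satisfies the $F$-inverse identities is then routine. $X$-generation in the extended signature follows because terms in $\mathbb I\m_X$ evaluate to $\langle \ol w\rangle^c$ via journeys (the $^\m$ operation supplies exactly the "jump" subgraphs $\{1,g\}^c$), and every compact $\Delta \ni 1$ arises this way; the $F$-Sch\"utzenberger graph $F\Gamma((\Delta,g)) = \langle \ol w\rangle^{c_{S_c}}$ equals $\Delta$ by comparing the closure operator $c_{S_c}$ with $()^c$.

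The main obstacle I expect is proving associativity cleanly and, relatedly, showing $c_{S_c} = ()^c$ so that the Schützenberger graph identifications go through -- i.e. that the closure operator reconstructed from the monoid $S_c$ (via Definition~\ref{def:E-unit-yields-closure} or~\ref{def:F-inverse-yields-closure}) is the one we started with. This amounts to showing a connected (resp. arbitrary) subgraph $\Delta$ is closed under $()^c$ if and only if it is "path-closed" (resp. "journey-closed") with respect to equalities in $S_c$, which requires carefully tracking how words/terms evaluating equally in $S_c$ correspond exactly to coterminal paths/journeys inside compact closed graphs -- essentially the content of Proposition~\ref{prop:stephen_main} and Proposition~\ref{prop:language-of-F-Sch}, adapted to this setting.
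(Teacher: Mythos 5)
Most of your proposal matches the paper's proof step for step: the well-definedness and associativity checks via $(A\cup B^c)^c=(A\cup B)^c$, the verification of inverses, the identification of $E(S_c)$ with the pairs $(\Delta,1)$ and hence $E$-unitarity via the idempotent-pure projection onto the second coordinate, the description of the natural partial order, the formula $w_{S_c}=(\langle\ol w\rangle^c,w_G)$ and the resulting $X$-generation, and the characterisation of $(\Delta,g)^\m$ as $(\{1,g\}^c,g)$ are all exactly the paper's arguments.

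The one place you diverge — and where you have correctly, and honestly, located the difficulty — is the identification of the (F-)Schützenberger graph of $(\Delta,g)$ with $\Delta$. Your plan is to establish $c_{S_c}=()^c$ and deduce $F\Gamma(\Delta,g)=\langle\ol w\rangle^{c_{S_c}}=\langle\ol w\rangle^{c}=\Delta$. Be careful: in the paper the equality $()^c=()^{c_{S_c}}$ is Proposition \ref{prop:mutually-inverse-F} (resp.\ \ref{prop:mutually-inverse-E}), and its proof there is \emph{deduced from} this lemma (the compact sets of $c_{S_c}$ are the $F$-Schützenberger graphs of $S_c$, which by this lemma are the graphs $\Delta$, i.e.\ the compact sets of $()^c$). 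So invoking that comparison here risks circularity unless you prove $c_{S_c}=()^c$ from scratch. That is doable — one direction follows from monotonicity and $G$-invariance, and for the other one can take a finite $F\subseteq\Delta$ spanned by a closed journey $w$ at $1$ and compare $w$ with terms $w\cdot t$ where $\langle\ol t\rangle\subseteq F^c$ picks out an arbitrary vertex or edge of $F^c$ — but you would need to supply this argument, and it is precisely the ``careful tracking'' you defer in your last paragraph. The paper instead sidesteps the issue entirely: in case (\ref{item:closure-E}) it computes $S\Gamma(\Delta,1)$ directly from the observation that $\rrel$ is equality in the first coordinate (so $V(S\Gamma(\Delta,1))=\{(\Delta,g):g\in V(\Delta)\}$, with edges matching those of $\Delta$); and in case (\ref{item:closure-F}) it tests membership of a vertex $h$ (resp.\ an edge $e$) in $F\Gamma(\Delta,g)$ against the single term $u^\m v^\m$ (resp.\ $u^\m l(e)l(e)^{-1}v^\m$) whose journey spans exactly $\{1,h,g\}$ (resp.\ $\{1,g\}\cup\langle e\rangle$), and applies Proposition \ref{prop:language-of-F-Sch} together with the already-established evaluation formula and order description. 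You should either adopt that route or fill in the direct proof of $c_{S_c}=()^c$; as it stands this step is a genuine gap in your write-up, though not a wrong idea.
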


\begin{proof}
	First notice that the operations are well-defined as $\Delta \cup g\Xi$ always contains $1$ and $gh$, and it is also connected when $P=\csub \Gamma_X$. The compactness of $(\Delta \cup g\Xi)^c$  follows easily from the compactness of $\Delta$ and $g\Xi$, furthermore $g^{-1}\Delta$ is closed and compact by $G$-invariance, and contains $1$ and $g^{-1}$.
	The multiplication is associative:
	\begin{align*}
		&((\Delta, g)(\Xi, h))(\Omega, k)=((\Delta \cup g\Xi)^c, gh)(\Omega, k)=(((\Delta \cup g\Xi)^c \cup gh\Omega)^c, ghk)=\\
		&=((\Delta \cup g\Xi \cup gh\Omega)^c, ghk), \hbox{ and}\\
		&(\Delta, g)((\Xi, h)(\Omega, k))=(\Delta, g)((\Xi \cup h\Omega)^c, hk)=((\Delta \cup g(\Xi \cup h\Omega)^c)^c, ghk)=\\
		&=((\Delta \cup (g\Xi \cup gh\Omega)^c)^c, ghk)=((\Delta \cup g\Xi \cup gh\Omega)^c, ghk).
	\end{align*}
	It follows that
	\begin{align*}
		&(\Delta, g)(g^{-1}\Delta, g^{-1}) (\Delta, g)=(\Delta \cup g(g^{-1}\Delta) \cup gg^{-1}\Delta, gg^{-1}g)=(\Delta, g), \hbox{ and}\\
		&(g^{-1}\Delta, g^{-1}) (\Delta, g)(g^{-1}\Delta, g^{-1}) =(g^{-1}\Delta \cup g^{-1}\Delta \cup g^{-1}gg^{-1}\Delta, g^{-1}gg^{-1})=(g^{-1}\Delta, g^{-1}).
	\end{align*}
	This shows that $S_c$ is an inverse semigroup, and furthermore notice that $(\{1\}^c, 1)$ is the identity element of $S_c$.
	
	To see that $S_c$ is $E$-unitary with greatest group image $G$, observe that 
	$G$ is a group image of $S_c$ by projection onto the second coordinate, and the preimage of $1$ under this mapping is exactly the set of idempotents
	$$E(S_c) = \{(\Delta,1): \Delta \in P \hbox{ is a compact subgraph containing } 1\}.$$
	This in particular shows that $G$ is the greatest group image, and the $\sigma$-relation is equality in the second component. 
	
	By definition, $(\Delta, g) \leq (\Xi, h)$ holds if and only if $(\Delta, g)=(\Delta, g)(\Delta, g)^{-1}(\Xi, h)=(\Delta, 1)(\Xi,g)=((\Delta \cup \Xi)^c, h)$, which holds exactly when $g=h$ and $\Delta \supseteq \Xi$.
	
	To show (\ref{item:closure-E}), we first show that if $P=\csub \Gamma_X$, then $S_c$ is $X$-generated as an inverse monoid by $\iota: X \to S_c, x \mapsto (\ol x^c,x_G)$. Indeed, take a word $w=x_1^{\epsilon_1} \cdots x_k^{\epsilon_k}$ in $(X \cup X^{-1})^*$, and denote its value in $S_c$, as induced by $\iota$, by $w_{S_c}$. Then it is easy to see that
	$$w_{S_c}=\prod_{i=1}^{k} (\ol x_i^c, (x_i)_G)^{\epsilon_i}=(\langle\ol w\rangle^c, w_G).$$
	Furthermore, if $(\Delta, g) \in S_c$, then $\Delta=F^c$ for some finite connected graph $F$, and we can assume that $1, g \in F$ without loss of generality. Take a word $w$ which labels a path in $\Delta$ from $1$ to $g$, and spans $F$. It then follows that $w_{S_c}=(\Delta, g)$.
	
	Let us determine the Sch\"utzenberger graphs of $S_c$. Since $(\Delta, g)(\Delta, g)^{-1}=(\Delta, 1)$, the $\rrel$-relation is equality in the first component. Thus $$\rrel_{(\Delta, 1)} =V(S\Gamma(\Delta, 1))=\{(\Delta, g): g \in V(\Delta)\}.$$ 
	Furthermore, there is an edge labeled by $x \in X \cup X^{-1}$ from $(\Delta, g)$ to $(\Delta, h)$ in $S\Gamma(\Delta, 1)$ whenever
	$(\Delta,g)(\ol x^c, x_G)=((\Delta \cup g \ol x^c)^c, gx_G)=(\Delta, h)$, which is the case if and only if there is an edge in $\Delta$ from $g$ to $h$ labeled by $x$. This shows that the map $V(S\Gamma(\Delta, 1)) \to \Delta$, $(\Delta, g) \mapsto g$ defines an $X$-labeled graph isomorphism between $S\Gamma(\Delta, 1)$ and $\Delta$.
	
	For (\ref{item:closure-F}), notice that if $P=\sub \Gamma_X$, then $(\{1,g\}^c, g) \sigmarel (\Delta, g)$ for any $\Delta \in P$, moreover, any element of the $\sigma$-class of $(\{1,g\}^c, g)$ is of the form $(\Xi, g)$ with $1,g \in \Xi$, so $(\{1,g\}^c, g) \geq (\Xi, g)$, showing $(\{1,g\}^c, g)=(\Delta, g)^\m$ indeed. 
	
	We claim if $P=\sub \Gamma_X$, then $S_c$ is $X$-generated as an $F$-inverse monoid by the same map $\iota: X \to S_c, x \mapsto (\ol x^c,x_G)$. The proof is analogous to the previous case. For any term $w \in \mathbb I\m_X$, denoting its value in $S_c$ induced by $\iota$ by $w_{S_c}$, it is routine to show that
	$$w_{S_c}=(\langle\ol w\rangle^c, w_G).$$
	It follows that if $(\Delta, g) \in S_c$ with $\Delta=F^c$ for some finite graph $F$, which we can assume contains $1$ and $g$, then we can choose a term $w \in \mathbb I\m_X$ labeling a $1$ to $g$ journey which spans $F$, and thus
$$w_{S_c}=(\Delta,g).$$

	For the statement on $F$-Sch\"utzenberger graphs, let $(\Delta, g) \in S_c$ and consider $F\Gamma(\Delta, g) \subseteq \Gamma_X$ and a vertex $h \in \Gamma_X$. Choose words $u,v \in \mathbb I\m_X$ with $u_G=h$, and $hv_G=g$, and notice that $\ol{u^\m v^\m}$ is a $1$ to $g$ journey in $\Gamma_X$ with $\langle \ol{u^\m v^\m} \rangle=\{1,h,g\}$. So we have $h \in F\Gamma(\Delta, g)$ if and only if $\langle \ol{u^\m v^\m} \rangle \subseteq F\Gamma(\Delta, g)$, which by Proposition \ref{prop:language-of-F-Sch} happens exactly when $(u^\m v^\m)_{S_c}=(\langle \ol{u^\m v^\m} \rangle^{c}, g) \geq (\Delta, g)$. This is in turn true whenever $\langle \ol{u^\m v^\m} \rangle \subseteq \Delta$, which is equivalent to $h \in \Delta$. Thus we have seen that $F\Gamma(\Delta, g)$ and $\Delta$ have the same vertex set.
	
	Proving that they have the same set of edges is similar: taking an edge $e \in \Gamma_X$, we choose  $u,v \in \mathbb I\m_X$ with respect to the vertex $h=\alpha(e)$ as before, and observe that since $\langle \ol{u^\m l(e)l(e)^{-1} v^\m} \rangle = \{1,g\} \cup \langle e \rangle$, $e \in F\Gamma(\Delta, g)$ holds if and only if $\langle \ol{u^\m l(e)l(e)^{-1} v^\m} \rangle \subseteq F\Gamma(\Delta, g)$, which by Proposition \ref{prop:language-of-F-Sch} is equivalent to $\langle \ol{u^\m l(e)l(e)^{-1} v^\m} \rangle \subseteq \Delta$, which in turn is if and only if $e \in \Delta$.
\end{proof}

When we choose $()^c$ to be the identity map, which is indeed a $G$-invariant finitary closure operator, the above construction recovers the Margolis-Meakin expansion of $G=\langle X \rangle$ \cite{MM} in the $E$-unitary case, and its $F$-inverse analogue defined in \cite{AKSz} in the $F$-inverse case. In particular, when $G$ is free on $X$, we obtain the free inverse monoid on $X$ and the free $F$-inverse monoid on $X$, respectively.

The next two statements show that the constructions in Definition \ref{def:inverse-from-closure}, and Definitions \ref{def:E-unit-yields-closure} and \ref{def:F-inverse-yields-closure} are mutually inverse.

\begin{Prop}
	\label{prop:mutually-inverse-E}
		For any $X$-generated, $E$-unitary inverse monoid $S$ with greatest group image $G$, the map $\psi\colon S \to S_{c_S}$, $s \mapsto (S\Gamma(s), s\sigma)$ is a canonical isomorphism. Similarly, for any $X$-generated group $G$ and any $G$-invariant, finitary closure operator $()^c$ on $(\csub \Gamma_X, \subseteq)$, we have $()^c=()^{c_{S_c}}$. 
\end{Prop}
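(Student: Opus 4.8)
The statement has two halves. For the first, I would verify directly that $\psi\colon S \to S_{c_S}$, $s \mapsto (S\Gamma(s), s\sigma)$, is a well-defined canonical isomorphism. Well-definedness: since $S$ is $E$-unitary, $S\Gamma(s)$ embeds in $\Gamma_X$ containing $1=ss^{-1}\sigma$ and $s\sigma$, and it is a compact set for $c_S$ by the observation (after Lemma \ref{Lem:S-yields-closure-F-inv}) that the compact sets of $c_S$ are exactly the embedded Schützenberger graphs. That $\psi$ respects the generators is immediate from $\iota(x)=(\ol x^{c_S}, x_G)$ and $S\Gamma(x_S)=\langle \ol x\rangle^{c_S}$. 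For the homomorphism property, I would compute $\psi(st)=(S\Gamma(st),(st)\sigma)$ and compare with $\psi(s)\psi(t)=((S\Gamma(s)\cup (s\sigma)S\Gamma(t))^{c_S}, (s\sigma)(t\sigma))$; the second coordinates agree trivially, and for the first coordinate one uses the standard description (via Stephen/Proposition \ref{prop:stephen_main}) that $S\Gamma(st)$ is obtained by gluing a translated copy of $S\Gamma(t)$ onto $S\Gamma(s)$ at the vertex $s\sigma$ and then closing up, which is precisely $(S\Gamma(s)\cup (s\sigma)S\Gamma(t))^{c_S}$. Injectivity: if $\psi(s)=\psi(t)$ then $s\sigma=t\sigma$ and $S\Gamma(s)=S\Gamma(t)$, and since an element of $S$ is determined by its $\rrel$-class (equivalently, its Schützenberger graph together with the chosen vertex $s$) together with its $\sigma$-class — or more directly, writing $s=w_S$ for $w\in(X\cup X^{-1})^*$, Proposition \ref{prop:stephen_main}(\ref{item:stephen_main_language}) shows $w$ labels a $1$-to-$s\sigma$ path in $S\Gamma(t)=S\Gamma(s)$, hence $w_S\geq t$, and symmetrically $t\geq s$, so $s=t$. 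Surjectivity: given $(\Delta,g)\in S_{c_S}$, write $\Delta=F^{c_S}$ for finite connected $F$ containing $1$ and $g$, take a word $w$ labeling a $1$-to-$g$ path spanning $F$; then $w_S$ maps to $(\langle\ol w\rangle^{c_S}, w_G)=(\Delta, g)$.

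For the second half, $()^c=()^{c_{S_c}}$, I would argue at the level of closed sets, using that both are $G$-invariant finitary closure operators on $(\csub\Gamma_X,\subseteq)$ and hence are determined by their compact sets. By Lemma \ref{Lem:closure-operator-yields-S}(\ref{item:closure-E}), the Schützenberger graph of $(\Delta, g)\in S_c$ is (isomorphic to, in fact equal to, under the vertex map) $\Delta$; and $\Delta$ ranges exactly over the compact sets of $()^c$ containing $1$ as $(\Delta,g)$ ranges over $S_c$, while by $G$-invariance every compact set of $()^c$ is a translate of one containing $1$. On the other hand, the compact sets of $c_{S_c}$ are exactly the embedded Schützenberger graphs of $S_c$, again by the observation following Lemma \ref{Lem:S-yields-closure-F-inv}. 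So the two operators have the same compact sets, hence — being finitary — the same closed sets, hence are equal. I would spell out the one point needing care: that for a compact set $\Delta$ of $()^c$, the Schützenberger graph $S\Gamma(\Delta,1)$ of $S_c$ equals $\Delta$ not merely abstractly but as a subgraph of $\Gamma_X$, which is exactly what the explicit vertex identification $(\Delta,g)\mapsto g$ in the proof of Lemma \ref{Lem:closure-operator-yields-S}(\ref{item:closure-E}) gives, since that isomorphism is label-preserving and fixes the vertex set setwise.

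The main obstacle is the homomorphism property of $\psi$, i.e. identifying $S\Gamma(st)$ with $(S\Gamma(s)\cup (s\sigma)S\Gamma(t))^{c_S}$ inside $\Gamma_X$. This is the content of how Schützenberger graphs behave under multiplication, and while it is ``well known to experts'' (and implicit in Proposition \ref{prop:stephen_main}), writing it cleanly requires checking both inclusions: that $(S\Gamma(s)\cup(s\sigma)S\Gamma(t))^{c_S}$ is the smallest closed subgraph containing a $1$-to-$(st)\sigma$ path labeled by a word representing $st$ (take $w$ representing $s$, $v$ representing $t$, then $wv$ represents $st$ and $\langle\ol{wv}\rangle = \langle\ol w\rangle \cup w_G\langle\ol v\rangle \subseteq S\Gamma(s)\cup(s\sigma)S\Gamma(t)$, giving $S\Gamma(st)\subseteq (S\Gamma(s)\cup(s\sigma)S\Gamma(t))^{c_S}$), and conversely that $S\Gamma(st)$ contains $S\Gamma(s)$ and $(s\sigma)S\Gamma(t)$ because $s\geq st\,(st)^{-1}s$-type relations place the relevant paths in $S\Gamma(st)$ — more precisely, any word $\geq s$ read from $1$ and any word $\geq t$ read from $s\sigma$ yield paths that must lie in $S\Gamma(st)$ once one notes $S\Gamma(st)$ is closed under $c_S$ and contains $1$ and $s\sigma$. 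Everything else (well-definedness, injectivity, surjectivity, and the second half) is routine given the machinery already assembled in the excerpt.
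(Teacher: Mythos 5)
Your proposal is correct and follows essentially the paper's route: the paper delegates the isomorphism claim to Steinberg's proof (suggesting one adapt the $F$-inverse argument of Proposition \ref{prop:mutually-inverse-F}), and your direct verification --- well-definedness, the homomorphism property via $S\Gamma(s)=\langle\ol w\rangle^{c_S}$ and the closure identity $(A^{c}\cup B^{c})^{c}=(A\cup B)^{c}$, injectivity via Proposition \ref{prop:stephen_main}(\ref{item:stephen_main_language}), and surjectivity from canonicity --- is exactly that adaptation. The second half (comparing compact sets of the two finitary operators) is the paper's argument verbatim, with your extra care about the embedding $(\Delta,g)\mapsto g$ being a welcome clarification rather than a deviation.
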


\begin{proof}
	The fact that $\psi$ is an isomorphism is shown in the proof of \cite[Theorem 3.1]{BS03} (but the reader can also adapt the upcoming proof of Proposition \ref{prop:mutually-inverse-F} for a self-contained exposition) and $\psi$ is canonical by definition. 
	
	To show $()^c=()^{c_{S_c}}$, we observe that both closure operators have the same compact sets, which is sufficient as they are both finitary. The compact sets of $()^{c_{S_c}}$ are by Lemma \ref{Lem:S-yields-closure-E-unit} the embedded copies of the Sch\"utzenberger graphs of $S_c$, which by Lemma \ref{Lem:closure-operator-yields-S}, and the definition of $S_c$, and the $G$-invariance of $()^c$, are exactly the compact subgraphs of $()^c$.
\end{proof}

\begin{Prop}
	\label{prop:mutually-inverse-F}
	For any $X$-generated, $F$-inverse monoid $S$ with greatest group image $G$, the map $\psi\colon S \to S_{c_S}$, $s \mapsto (F\Gamma(s), s\sigma)$ is a canonical isomorphism. Similarly, for any $X$-generated group $G$ and any $G$-invariant, finitary closure operator $()^c$ on $(\sub \Gamma_X, \subseteq)$, we have $()^c=()^{c_{S_c}}$. 
\end{Prop}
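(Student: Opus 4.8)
The plan is to prove the two halves separately, mirroring the structure of Proposition \ref{prop:mutually-inverse-E} but giving the first half in full (as the parenthetical remark invites) since we cannot simply cite \cite{BS03} here. For the isomorphism claim, I would first observe that by Lemma \ref{Lem:closure-operator-yields-S}(\ref{item:closure-F}) applied to $()^c=()^{c_S}$, every element of $S_{c_S}$ has the form $(\Delta, g)$ with $\Delta$ compact containing $1$ and $g$; since compact sets of $c_S$ are by definition the $F$-Sch\"utzenberger graphs (Definition \ref{def:F-Sch-graph}, via $\langle \ol w\rangle^{c_S}$), and since $F\Gamma(s)$ always contains $1$ and $s\sigma$ (as $\langle\ol w\rangle$ does for any $w$ representing $s$), the map $\psi$ is well-defined. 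That $\psi$ is canonical is immediate from $\iota_{S_{c_S}}(x)=(\ol x^{c_S}, x_G)=(F\Gamma(\iota_S(x)), \iota_S(x)\sigma)=\psi(\iota_S(x))$. Surjectivity: given $(\Delta, g)\in S_{c_S}$, write $\Delta=F^{c_S}$ with $F$ finite containing $1,g$, pick a term $w\in\mathbb I\m_X$ labeling a $1$-to-$g$ journey spanning $F$; then $F\Gamma(w_S)=\langle\ol w\rangle^{c_S}=F^{c_S}=\Delta$ and $w_S\sigma=w_G=g$, so $\psi(w_S)=(\Delta,g)$.

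For injectivity and the homomorphism property I would lean on Proposition \ref{prop:language-of-F-Sch}. If $\psi(s)=\psi(t)$, then $s\sigma=t\sigma$ and $F\Gamma(s)=F\Gamma(t)$; picking terms $u,v$ for $s,t$ respectively, $v$ labels a $1$-to-$v_G$ journey in $F\Gamma(v)=F\Gamma(u)$, and $v_G=u_G$, so by Proposition \ref{prop:language-of-F-Sch} we get $v_S\geq u_S$, and symmetrically $u_S\geq v_S$, hence $s=u_S=v_S=t$. For the homomorphism property, take $s,t\in S$ with terms $u,v$; then $st$ is represented by $uv$, and $\ol{uv}$ is the concatenation $\ol u\cdot (u_G\,\ol v)$, so $\langle\ol{uv}\rangle = \langle\ol u\rangle \cup u_G\langle\ol v\rangle$, whence $F\Gamma(st)=(\langle\ol u\rangle\cup u_G\langle\ol v\rangle)^{c_S}=(F\Gamma(s)\cup (s\sigma)F\Gamma(t))^{c_S}$ using idempotence of $c_S$ and that $\langle\ol u\rangle\subseteq F\Gamma(s)$ etc.; together with $(st)\sigma=(s\sigma)(t\sigma)$ this is exactly the product $\psi(s)\psi(t)$ in $S_{c_S}$. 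Compatibility with $^{-1}$ follows from $\langle\ol{u^{-1}}\rangle = (u_G)^{-1}\langle\ol u\rangle$ (using the journey-inverse conventions and $G$-invariance of $c_S$), and compatibility with $^\m$ from Lemma \ref{Lem:closure-operator-yields-S}(\ref{item:closure-F}): $\psi(s^\m)=(F\Gamma(s^\m), s\sigma)$ where $\langle\ol{u^\m}\rangle=\{1,u_G\}$, so $F\Gamma(s^\m)=\{1,s\sigma\}^{c_S}=\psi(s)^\m$.

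For the second half, $()^c=()^{c_{S_c}}$: both are finitary closure operators on $\sub\Gamma_X$, so it suffices to check they have the same compact sets. By Lemma \ref{Lem:S-yields-closure-F-inv} (and the remark following it identifying compact sets as $F$-Sch\"utzenberger graphs), the compact sets of $c_{S_c}$ are exactly the $F$-Sch\"utzenberger graphs of $S_c$; by Lemma \ref{Lem:closure-operator-yields-S}(\ref{item:closure-F}), these are precisely the sets $\Delta$ for which $(\Delta,g)\in S_c$ for some $g$, i.e. the compact subgraphs of $()^c$ containing $1$; and by $G$-invariance of $()^c$, the full family of compact subgraphs of $()^c$ is the $G$-translate-closure of these, which coincides with the compact sets of $c_{S_c}$ since $c_{S_c}$ is also $G$-invariant. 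Hence the two operators agree on compacts, and therefore everywhere. The main obstacle I anticipate is the bookkeeping in the homomorphism verification — in particular confirming that $\langle\ol{uv}\rangle$ genuinely decomposes as $\langle\ol u\rangle\cup u_G\langle\ol v\rangle$ across the "jump" between the $u$- and $v$-parts of the journey, and that applying $c_S$ to a union equals applying it after first closing the pieces (which is just idempotence plus monotonicity, but needs to be spelled out cleanly); everything else is a routine transcription of the $E$-unitary argument with paths replaced by journeys.
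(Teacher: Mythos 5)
Your proposal is correct and follows essentially the same route as the paper: the same computation for the homomorphism property via $\langle\ol{uv}\rangle=\langle\ol u\rangle\cup u_G\langle\ol v\rangle$ together with idempotence and $G$-invariance of $c_S$, the same use of Proposition \ref{prop:language-of-F-Sch} for injectivity, and the same identification of compact sets (up to $G$-translates) for the second half. The only cosmetic difference is that you establish surjectivity by exhibiting a preimage directly, whereas the paper deduces it from $\psi$ being a canonical morphism onto the $X$-generated monoid $S_{c_S}$.
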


\begin{Proof}
Note that the map $\psi$ is well-defined, as $F\Gamma(s)$ is compact with respect to $c_S$ and contains $s\sigma$ by definition. To show that $\psi$ it is a morphism (of $F$-inverse monoids), let $s, t \in S$ and choose terms $u, v \in \mathbb I\m _X$ with $u_S=s, v_S=t$, so in particular $\ol u$ is a path from $1$ to $s\sigma$ in $\Gamma_X$. Then we have
\begin{align*}
	&\psi(s)\psi(t)=(\langle \ol u \rangle^{c_S}, s\sigma)(\langle \ol v \rangle^{c_S}, t\sigma)=((\langle \ol u \rangle^{c_S} \cup s\sigma  \langle \ol v \rangle^{c_S})^{c_S}, (st)\sigma)=\\
	&=((\langle \ol u \rangle \cup s\sigma \langle \ol v \rangle)^{c_S}, (st)\sigma)=(\langle \ol{uv} \rangle^{c_S}, (st)\sigma)=\psi(st),
\end{align*}
furthermore 
$$\psi(s)^{-1}=(\langle \ol u \rangle^{c_S}, s\sigma)^{-1}=((s^{-1}\sigma) \langle \ol u \rangle^{c_S}, s^{-1}\sigma)=(\langle \ol {u^{-1}} \rangle^{c_S}, s^{-1}\sigma)=\psi(s^{-1}).$$
Lastly,
$$\psi(s)^\m=(\langle \ol u \rangle^{c_S}, s\sigma)^\m=(\{1, s\sigma\}^{c_S}, s\sigma)=(\langle \ol {u^\m} \rangle^{c_S}, s\sigma)=\psi(s^\m).$$
It is clear by definition that $\psi$ is canonical, which also implies it is surjective.
The fact that $\psi$ is one-to-one follows from Proposition \ref{prop:language-of-F-Sch}: if $u, w \in \mathbb I\m_X$ are such that $\psi(u_s)=\psi(w_s)$, that is, they label coterminal journeys in $\Gamma_X$ from $1$, and  $\langle \ol u \rangle^{c_S}=\langle \ol w \rangle^{c_S}$, then we have both $u_S \geq w_S$ and $w_S \geq u_S$, hence $u_S=w_S$.

The proof of  $()^c=()^{c_{S_c}}$ is analogous to the $E$-unitary case and follows from Lemmas \ref{Lem:S-yields-closure-F-inv} and \ref{Lem:closure-operator-yields-S} similarly.
\end{Proof}

\subsection{McAlister's $P$-theorem}

The framework above allows us to recover Steinberg's proof \cite{BS03} of McAlister's $P$-theorem. In particular given an $X$-generated group $G$ with Cayley graph $\Gamma_X$, and a $G$-invariant, finitary closure operator on $(\csub \Gamma_X, \subseteq)$, the inverse monoid $S_c$ is a $P$-semigroup in the sense of McAlister. 

A $P$-semigroup is an $E$-unitary inverse semigroup associated to a triple $( \mathcal X, \mathcal Y, G)$, where $G$ is a group, $\mathcal Y$ is a meet semilattice, and $\mathcal X$ is a poset containing $\mathcal Y$ as an order ideal. Furthermore, $G$ acts on $\mathcal X$ by order preserving maps, and $\mathcal X$ is the orbit of $\mathcal Y$ under this action. The $P$-semigroup $P(G, \mathcal Y, \mathcal X)$ consist of pairs $(y,g)$ where $g \in G$, and $y \in g\mathcal Y \cap \mathcal Y$, and multiplication is given by $(y,g)(z,h)=(y \wedge gz, gh)$, where $y \wedge gz$ is the greatest lower bound of $y$ and $gz$ in $\mathcal X$ (which the conditions imply exists).

In particular, $S_c$ is the $P$-semigroup associated to the triple $(G, \mathcal Y, \mathcal X)$
where $\mathcal X$ is the set of compact subgraphs of $\csub \Gamma_X$ ordered by reverse inclusion, $\mathcal Y$ is the set of graphs in $\mathcal X$ containing $1$, and the meet in $\mathcal Y$ is given by $\Delta \wedge \Xi=(\Delta \cup \Xi)^c$. Notice that $\mathcal Y$ is indeed an order ideal, and $\mathcal X$ is its orbit under the action of $G$. So Proposition \ref{prop:mutually-inverse-E} gives us McAlister's $P$-theorem for $E$-unitary inverse monoids as an immediate corollary:

\begin{Cor}
Every $E$-unitary inverse monoid is isomorphic to a $P$-semigroup.
\end{Cor}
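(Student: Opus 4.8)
The plan is to leverage the machinery already assembled in the excerpt rather than reprove McAlister's theorem from scratch. The key observation is that by Proposition \ref{prop:mutually-inverse-E}, every $X$-generated $E$-unitary inverse monoid $S$ with greatest group image $G$ is isomorphic (via the canonical map $\psi$) to $S_{c_S}$, the inverse monoid constructed from the $G$-invariant, finitary closure operator $c_S$ on $(\csub \Gamma_X, \subseteq)$. So it suffices to show that \emph{every} inverse monoid of the form $S_c$, for $()^c$ a $G$-invariant finitary closure operator on $\csub \Gamma_X$, is a $P$-semigroup in McAlister's sense. (Every inverse semigroup embeds in an $X$-generated inverse monoid for a suitable $X$, e.g. by adjoining an identity and taking $X = S^1$, so the reduction to the $X$-generated monoid case is harmless.)

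Next I would exhibit the triple explicitly, as indicated in the paragraph preceding the corollary: set $\mathcal X$ to be the poset of compact subgraphs in $\csub \Gamma_X$ ordered by \emph{reverse} inclusion, let $\mathcal Y \subseteq \mathcal X$ be the subposet of those compact subgraphs containing the vertex $1$, and let $G$ act on $\mathcal X$ by the left translation action on $\Gamma_X$ (which preserves compactness by $G$-invariance of $()^c$, and is order-preserving since it preserves inclusion, hence reverse inclusion). Then I would check the three structural requirements of a McAlister triple: first, $\mathcal Y$ is a meet-semilattice, with $\Delta \wedge \Xi = (\Delta \cup \Xi)^c$ (this is the least element in reverse-inclusion order, i.e. the largest graph, below both $\Delta$ and $\Xi$ that is compact and contains $1$; compactness of the union follows from finitariness and the definition of compact sets, and $(\Delta\cup\Xi)^c$ is connected since $1$ lies in both $\Delta$ and $\Xi$). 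Second, $\mathcal Y$ is an order ideal of $\mathcal X$: if $\Delta \in \mathcal Y$ and $\Xi \le \Delta$ in $\mathcal X$, i.e. $\Delta \subseteq \Xi$, then $1 \in \Xi$, so $\Xi \in \mathcal Y$. Third, $\mathcal X$ is the $G$-orbit of $\mathcal Y$: any compact $\Delta \in \csub \Gamma_X$ contains some vertex $g$, and then $g^{-1}\Delta$ is a compact connected subgraph containing $1$, so $\Delta = g(g^{-1}\Delta) \in G\mathcal Y$.

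With the triple in hand, the final step is to verify that $P(G, \mathcal Y, \mathcal X)$ is literally $S_c$. By definition $P(G,\mathcal Y, \mathcal X)$ consists of pairs $(\Delta, g)$ with $g \in G$ and $\Delta \in g\mathcal Y \cap \mathcal Y$; unwinding, $\Delta \in \mathcal Y$ means $\Delta$ is a compact connected subgraph containing $1$, and $\Delta \in g\mathcal Y$ means $g^{-1}\Delta$ contains $1$, i.e. $\Delta$ contains $g$ — exactly the condition defining elements of $S_c$. The multiplication in the $P$-semigroup is $(\Delta, g)(\Xi, h) = (\Delta \wedge g\Xi, gh) = ((\Delta \cup g\Xi)^c, gh)$, which is precisely the multiplication of $S_c$ from Definition \ref{def:inverse-from-closure}; the inverse likewise matches. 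Hence $S \cong S_{c_S} = P(G, \mathcal Y, \mathcal X)$, giving the corollary.

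I expect the only mildly delicate point to be confirming that $\Delta \wedge \Xi = (\Delta \cup \Xi)^c$ really is the meet in $(\mathcal X, \text{reverse }\subseteq)$ restricted to $\mathcal Y$, i.e. that it is the \emph{reverse-inclusion-largest} compact connected graph containing $1$ contained (in the reverse order) below both $\Delta$ and $\Xi$ — concretely, that any compact $\Omega \in \csub\Gamma_X$ with $1 \in \Omega$ and $\Delta \cup \Xi \subseteq \Omega$ satisfies $(\Delta\cup\Xi)^c \subseteq \Omega$, which is immediate from monotonicity and idempotency of $()^c$ together with $\Omega$ being closed (a compact subgraph of a finitary closure operator is closed). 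The rest is bookkeeping that the already-established Lemma \ref{Lem:closure-operator-yields-S} and Proposition \ref{prop:mutually-inverse-E} do the real work for.
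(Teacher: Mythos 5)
Your proof is correct and follows exactly the paper's route: exhibit the McAlister triple with $\mathcal X$ the compact subgraphs of $\csub\Gamma_X$ under reverse inclusion, $\mathcal Y$ those containing $1$, meet $\Delta\wedge\Xi=(\Delta\cup\Xi)^c$, identify $P(G,\mathcal Y,\mathcal X)$ with $S_c$, and invoke Proposition \ref{prop:mutually-inverse-E}; indeed you supply more of the routine verifications than the paper does. One small imprecision: no embedding argument is needed to reduce to the $X$-generated case, since any $E$-unitary inverse monoid is already $X$-generated by taking $X$ to be its own underlying set with $\iota$ the identity map.
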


\subsection{Categories of $X$-generated $E$-unitary and $F$-inverse monoids}

Let $\mathcal E(X,G)$ be the category of $X$-generated $E$-unitary inverse monoids with greatest group image $G$, considered up to canonical isomorphism, where the morphisms are canonical inverse monoid homomorphisms. For a more detailed definition, the reader can consult \cite{MM}, where the category was first introduced. Analogously, define $\mathcal F(X,G)$ to be the category of $X$-generated $F$-inverse monoids with greatest group image $G$, considered up to canonical isomorphism, where the morphisms are canonical $F$-inverse monoid homomorphisms. This category was introduced in \cite{AKSz}.
Notice that given two objects $S,T$ in either category, denoting the natural morphisms onto $G$ from $S$ and $T$ by $\sigma^\sharp_S$ and $\sigma^\sharp_T$ respectively, any canonical morphism $\varphi \colon S \to T$ will satisfy $\sigma^\sharp_T \circ \varphi=\sigma^\sharp_S$.

As usual, we denote the Cayley graph of the $X$-generated group $G$ by $\Gamma_X$. Given such a group $G$, let $\mathcal C(X,G)$ and $\mathcal S(X,G)$ be the category of $G$-invariant, finitary closure operators on $(\csub \Gamma_X, \subseteq)$ and $(\sub \Gamma_X, \subseteq)$ respectively, where there is a morphism from $()^{c}$ to $()^{c'}$ whenever $\Delta^c \subseteq \Delta^{c'}$ for any $\Delta \in \csub \Gamma_X$.  The following equivalent characterization is useful:

\begin{Lem}
\label{lem:closure_morph_equiv}
Let $()^c$, $()^{c'}$ be two closure operators on a poset $(P, \leq)$, with respecitve sets of fixed points $Y_c$ and $Y_{c'}$. Then the following are equivalent:
\begin{enumerate}
\item for any $ x\in P$, we have $x^c \leq x^{c'}$;
\item $Y_c \supseteq Y_{c'}$.
\end{enumerate}
\end{Lem}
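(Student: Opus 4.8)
The plan is to prove the two implications separately, using the characterization of closure operators via their fixed-point sets and the standard formula recovering a closure operator from its fixed points.

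First I would show (2) $\Rightarrow$ (1). Suppose $Y_c \supseteq Y_{c'}$. Fix $x \in P$. By definition of a closure operator, $x^{c'} \in Y_{c'} \subseteq Y_c$, so $x^{c'}$ is a fixed point of $()^c$ with $x \leq x^{c'}$. Now I would invoke the monotonicity and idempotency of $()^c$: applying $()^c$ to $x \leq x^{c'}$ gives $x^c \leq (x^{c'})^c = x^{c'}$, the last equality because $x^{c'} \in Y_c$. Hence $x^c \leq x^{c'}$, as required. (Note this direction does not use the rmlb-completeness of $P$ at all; it is purely formal.)

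Next I would show (1) $\Rightarrow$ (2). Suppose $x^c \leq x^{c'}$ for all $x \in P$. Let $y \in Y_{c'}$, so $y = y^{c'}$. By extensivity, $y \leq y^c$, and by hypothesis $y^c \leq y^{c'} = y$. Thus $y^c = y$, i.e. $y \in Y_c$. This gives $Y_{c'} \subseteq Y_c$. Again this direction is purely formal and needs nothing about the ambient poset.

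I do not expect any genuine obstacle here: both implications are short formal manipulations using only extensivity, monotonicity, and idempotency, and in particular the argument works verbatim for an arbitrary poset $(P,\leq)$, with no need to assume $P$ is a complete lattice or rmlb-complete. The only point requiring a modicum of care is to make sure one does not accidentally appeal to the existence of the closure-from-fixed-points formula $x^c = \bigwedge\{y \in Y_c : x \leq y\}$, which is only valid in the complete-lattice case; the proof above sidesteps this by working directly with the operators. If desired, one could also remark that this lemma is exactly what makes the assignment $()^c \mapsto Y_c$ an order-reversing bijection onto its image compatible with the morphisms of $\mathcal C(X,G)$ and $\mathcal S(X,G)$, but that observation is not needed for the proof itself.
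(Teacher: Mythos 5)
Your proof is correct and matches the paper's argument essentially verbatim: for (2)$\Rightarrow$(1) both use that $x^{c'}$ is a $c$-closed element above $x$ (with your explicit appeal to monotonicity and idempotency merely spelling out the paper's ``it follows''), and for (1)$\Rightarrow$(2) both use the chain $y \leq y^c \leq y^{c'} = y$. Your added remarks about not needing rmlb-completeness are accurate but not needed.
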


\begin{proof}
First assume $Y_c \supseteq Y_{c'}$, and let $x \in P$. Then $x^{c'} \in Y_{c'} \subseteq Y_c$, so $x^{c'}$ is a $c$-closed set above $x$. It follows that $x^c \leq x^{c'}$. Conversely, assume $x^c \leq x^{c'}$ for any $x \in P$ and let $y \in Y_{c'}$. Then $y \leq y^c \leq y^{c'}=y$, so $y=y^c$, which means $y \in Y_c$ indeed.
\end{proof}

The following is a main theorem of the paper.

\begin{Thm}
	\label{Thm:catequivalent}
	\begin{enumerate}
	\item \label{item:catequivalent-E} The categories $\mathcal E(X,G)$ and $\mathcal C(X,G)$ are equivalent.
	\item \label{item:catequivalent-F} The categories $\mathcal F(X,G)$ and $\mathcal S(X,G)$ are equivalent.
	\end{enumerate}
	
\end{Thm}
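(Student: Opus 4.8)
The plan is to use the constructions and propositions already established to exhibit explicit functors in both directions and show they are mutually inverse (up to natural isomorphism), which is more than enough for an equivalence. I will treat parts \eqref{item:catequivalent-E} and \eqref{item:catequivalent-F} in parallel, since the arguments are formally the same with $\csub$ replaced by $\sub$ and Proposition~\ref{prop:mutually-inverse-E} replaced by Proposition~\ref{prop:mutually-inverse-F}.

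First I would define the functor $\Phi\colon \mathcal C(X,G) \to \mathcal E(X,G)$ on objects by $()^c \mapsto S_c$, using Definition~\ref{def:inverse-from-closure}; Lemma~\ref{Lem:closure-operator-yields-S} guarantees $S_c$ is an $X$-generated $E$-unitary inverse monoid with greatest group image $G$. On a morphism $()^c \to ()^{c'}$ (which by Lemma~\ref{lem:closure_morph_equiv} means $\Delta^c \subseteq \Delta^{c'}$ for all $\Delta$, equivalently $Y_c \supseteq Y_{c'}$), I would define $\Phi$ to send it to the map $S_c \to S_{c'}$, $(\Delta, g) \mapsto (\Delta^{c'}, g)$. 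One checks this is well-defined (if $\Delta$ is $c$-compact then $\Delta^{c'}$ is $c'$-compact, and it still contains $1$ and $g$), that it respects the multiplication $(\Delta \cup g\Xi)^c \mapsto ((\Delta\cup g\Xi)^c)^{c'} = (\Delta \cup g\Xi)^{c'} = (\Delta^{c'} \cup g\Xi^{c'})^{c'}$ using $c \leq c'$ and idempotency, respects inverses by $G$-invariance, and is canonical since it fixes the generators $(\ol x^{\,c}, x_G) \mapsto (\ol x^{\,c'}, x_G)$. Functoriality (identities to identities, composites to composites) is then immediate. In the other direction I would define $\Psi\colon \mathcal E(X,G) \to \mathcal C(X,G)$ on objects by $S \mapsto c_S$ (Definition~\ref{def:E-unit-yields-closure}, well-defined by Lemma~\ref{Lem:S-yields-closure-E-unit}), and on a canonical morphism $\varphi\colon S \to T$ by sending it to the morphism $c_T \to c_S$; here I must verify $\Delta^{c_T} \subseteq \Delta^{c_S}$, equivalently $Y_{c_T} \supseteq Y_{c_S}$ via Lemma~\ref{lem:closure_morph_equiv}, which follows because $u_S = v_S$ implies $u_T = v_T$ (as $\varphi$ is a morphism, canonical, so $w_T = \varphi(w_S)$), hence every pair forced to be coterminal by the relation defining $c_T$ is among those for $c_S$, so every $c_S$-closed graph is $c_T$-closed. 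Note $\Psi$ reverses morphism direction once, but so does the morphism convention implicitly built into the correspondence $Y_c \supseteq Y_{c'} \iff c \le c'$; I would double-check the variance carefully so that $\Psi$ is genuinely a (covariant) functor given how arrows were defined in each category, possibly by composing with the evident involution if needed.

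The core of the argument is then that $\Phi$ and $\Psi$ are mutually inverse up to natural isomorphism, and this is exactly the content of Proposition~\ref{prop:mutually-inverse-E} (resp.\ Proposition~\ref{prop:mutually-inverse-F}): for an object $S$ we have the canonical isomorphism $\psi_S\colon S \to S_{c_S}$, $s \mapsto (S\Gamma(s), s\sigma)$, so $\Phi\Psi \cong \mathrm{id}$; and for a closure operator $()^c$ we have $()^c = ()^{c_{S_c}}$ on the nose, so $\Psi\Phi = \mathrm{id}$. What remains to be checked — and this is the only genuinely new work beyond what the propositions give — is \emph{naturality}: that for each canonical morphism $\varphi\colon S \to T$ the square relating $\psi_S$, $\psi_T$, and $\Phi\Psi(\varphi)$ commutes. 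Concretely, one needs $(\Phi\Psi\varphi)(S\Gamma(s), s\sigma) = (T\Gamma(\varphi(s)), \varphi(s)\sigma)$, i.e.\ that $S\Gamma(s)^{c_T} = T\Gamma(\varphi(s))$ as subgraphs of $\Gamma_X$, and that $s\sigma = \varphi(s)\sigma$, the latter being automatic from $\sigma^\sharp_T \circ \varphi = \sigma^\sharp_S$. For the former, since $S\Gamma(s) = \langle \ol u\rangle^{c_S}$ for any word $u$ with $u_S = s$, monotonicity and idempotency give $S\Gamma(s)^{c_T} = \langle \ol u \rangle^{c_T} = T\Gamma(u_T) = T\Gamma(\varphi(s))$, using that $c_S \le c_T$ so $\langle \ol u\rangle^{c_S}{}^{c_T} = \langle \ol u \rangle^{c_T}$. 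So naturality reduces to this identity, which I expect to go through cleanly.

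I expect the main obstacle to be purely bookkeeping: getting the variance of the morphisms consistent across the four categories (the arrow in $\mathcal C(X,G)$ is defined by $\Delta^c \subseteq \Delta^{c'}$, which by Lemma~\ref{lem:closure_morph_equiv} is $Y_c \supseteq Y_{c'}$, a reverse inclusion on fixed-point sets, while canonical morphisms of inverse monoids compose covariantly), and confirming that the functor $\Psi$ built from $S \mapsto c_S$ is covariant as stated rather than contravariant. Once the directions are pinned down, everything else is an assembly of Lemmas~\ref{Lem:S-yields-closure-E-unit}, \ref{Lem:S-yields-closure-F-inv}, \ref{Lem:closure-operator-yields-S}, \ref{lem:closure_morph_equiv} and Propositions~\ref{prop:mutually-inverse-E}, \ref{prop:mutually-inverse-F}. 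The $F$-inverse case \eqref{item:catequivalent-F} is handled by the verbatim translation of the above, invoking the $F$-versions of each cited result (in particular using that morphisms of $F$-inverse monoids in the extended signature preserve $^\m$, so that $u_S = v_S$ in $\mathbb I\m_X$ still implies $u_T = v_T$ under a canonical morphism).
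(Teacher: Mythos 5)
Your overall strategy is exactly the paper's: define $\Phi\colon ()^c \mapsto S_c$ and $\Psi\colon S \mapsto ()^{c_S}$, use Lemma~\ref{lem:closure_morph_equiv} to translate morphisms of closure operators into reverse inclusions of fixed-point sets, define $\Phi$ on a morphism $()^c \to ()^{c'}$ by $(\Delta,g) \mapsto (\Delta^{c'},g)$, and invoke Propositions~\ref{prop:mutually-inverse-E} and \ref{prop:mutually-inverse-F} for the mutual inverseness. The paper also notes at the outset that both categories have at most one morphism between any pair of objects, so functoriality and naturality are automatic once one shows that a morphism $S \to T$ exists if and only if a morphism $()^{c_S} \to ()^{c_T}$ exists; your explicit naturality square is therefore redundant (though harmless).

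There is, however, one concrete error in your treatment of $\Psi$ on morphisms, precisely at the point you flagged as needing care. From a canonical morphism $\varphi\colon S \to T$ you correctly get that $u_S = v_S$ implies $u_T = v_T$; but this says the defining relation of $c_S$ is \emph{contained in} that of $c_T$ (every pair forced coterminal by $c_S$ is among those forced by $c_T$), not the reverse as you wrote. Consequently every $c_T$-closed graph is $c_S$-closed, i.e.\ $Y_{c_S} \supseteq Y_{c_T}$, which by Lemma~\ref{lem:closure_morph_equiv} gives $\Delta^{c_S} \subseteq \Delta^{c_T}$ and hence a morphism $()^{c_S} \to ()^{c_T}$ -- the \emph{same} direction as $\varphi$. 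So $\Psi$ is covariant as it stands; there is no direction reversal and no involution to compose with, and your later naturality computation (which silently uses $c_S \le c_T$, contradicting your claimed assignment $\varphi \mapsto (c_T \to c_S)$) only works with the corrected variance. As written, the contravariant $\Psi$ would not compose with $\Phi$ to give something naturally isomorphic to the identity, so this is a step that must be fixed; once the inclusion of relations is turned the right way round, the rest of your argument assembles into the paper's proof. The same correction applies verbatim to the $F$-inverse case.
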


\begin{proof}
	For (\ref{item:catequivalent-E}), notice that in both categories, there is at most one morphism between any pair of objects. 
	We will show that the maps 
	\begin{align*}
		&\Psi\colon \mathcal E(X,G) \to \mathcal C(X,G), S \mapsto ()^{c_S},\\
		&\Phi \colon  \mathcal C(X,G) \to \mathcal E(X,G), ()^c \to S_c
	\end{align*}
	are functors, that is, there is a morphism $S \to T$ in $\mathcal E(X,G)$ if and only if there is a morphism $()^{c_S} \to ()^{c_T}$ in $\mathcal C(X,G)$, and there is a morphism $()^{c} \to ()^{c'}$ in $\mathcal C(X,G)$ if and only if there is a morphism $S_c \to S_c'$ in $\mathcal E(X,G)$. By Proposition \ref{prop:mutually-inverse-E}, $\Psi \circ \Phi=\id_{\mathcal C(X,G)}$, and as $S$ and $S_{c_S}$ are canonically isomorphic, we also have $\Phi \circ \Psi=\id_{\mathcal E(X,G)}$, therefore the equivalence of the two categories will then follow.
	
	Assume $\varphi\colon S \to T$ is a canonical morphism between $E$-unitary inverse monoids with greatest group image $G$. By Lemma \ref{lem:closure_morph_equiv}, it suffices to show that any $c_T$-closed subgraph of $\Gamma_X$ is also $c_S$-closed.
	Let $\Delta$ be a closed subgraph of $\csub \Gamma_X$ with respect to $c_T$, and let $u \in (X \cup X^{-1})^\ast$ label a path in $\Delta$ from $p$ to $q$, and suppose $u_S=v_S$ for some $v\in (X\cup X^{-1})^*$. Then $u_T=\varphi(u_S)=\varphi(v_S)=v_T$. Since $\Delta$ is $c_T$-closed, $v$ must label a path coterminal with $u$ in $\Delta$, proving that $\Delta$ is $c_S$ closed.
	
	Conversely, assume that there is a morphism $()^{c} \to ()^{c'}$ between finitary, $G$-invariant closure operators on $(\csub, \subseteq)$, that is, we have $\Delta^c \subseteq \Delta^{c'}$ for any $\Delta \in \csub \Gamma_X$. We show that the map $\varphi \colon S_{c} \to S_{c'}, (\Delta, g) \mapsto (\Delta^{c'},g)$ is a morphism. Notice that it is well-defined, as $1, g \in \Delta$ implies $1, g \in \Delta^{c'}$, furthermore it is canonical as 
	$\varphi(\ol x^c, x_G)=(\ol x^{c'}, x_G)$. To see that it is a morphism, observe
	\begin{align*}
		&\varphi((\Delta, g)(\Xi,h))=(((\Delta \cup g\Xi)^c)^{c'},gh)=((\Delta \cup g\Xi)^{c'},gh)\\
		&\varphi(\Delta, g)\varphi(\Xi,h)=(\Delta^{c'}, g)(\Xi^{c'},h)=((\Delta^{c'} \cup g\Xi^{c'})^{c'},gh)=((\Delta \cup g\Xi)^{c'},gh).
	\end{align*}
	This completes the proof of part (\ref{item:catequivalent-E}).
	
	The proof of (\ref{item:catequivalent-F}) is nearly word to word the same, the only difference being that one needs to check the morphism $\varphi \colon S_{c} \to S_{c'}, (\Delta, g) \mapsto (\Delta^{c'},g)$ also respects the operation $^\m$. Indeed it does: $$\varphi((\Delta, g)^\m)=\varphi(\{1,g\}^c, g)=(\{1,g\}^{c'}, g)=(\Delta^{c'},g)^\m.$$
\end{proof}


It follows from Theorem \ref{Thm:catequivalent} that the initial object of $\mathcal E(X,G)$ corresponds to the identical closure operator, that is, it is the Margolis-Meakin expansion of $G$, as proved in \cite{MM}, and the initial object of $\mathcal F(G,X)$ to the $F$-inverse monoid defined in \cite{AKSz}.

\section{Presentations of $F$-inverse monoids}
\label{Sec:presentations}

Since $F$-inverse monoids form a variety, we can talk about presentations of $F$-inverse monoids given in the form of $\FInv \langle X \mid R\rangle$, where $R$ is a relation on $\mathbb I\m_X$. (Often the pairs in $R$ are denoted by equalities.)
The $F$-inverse monoid defined by the presentation is the quotient of the free $F$-inverse monoid on $X$ by the congruence generated by $R$. 
We will also assume that $R$ is symmetric, which we can do without loss of generality by taking the symmetric closure.
This relation satisfies the conditions in Definition \ref{def:F-inverse-yields-closure} with respect to the greatest group image of the monoid defined by the presentation. Therefore any presentation of $F$-inverse monoids gives rise to a closure operator $c_R$ on the Cayley graph of its greatest group image.

The key observation is the following.

\begin{Prop}
\label{prop:F-inverse-repr}
Let $S=\FInv \langle X \mid  R \rangle$ be an $X$-generated $F$-inverse monoid with greatest group image $G$. Then $c_R=c_S$.
\end{Prop}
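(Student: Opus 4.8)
The plan is to show that $c_R$ and $c_S$ have the same closed subgraphs of $\Gamma_X$, which suffices since both are closure operators on $(\sub \Gamma_X, \subseteq)$. By Definition \ref{def:F-inverse-yields-closure}, $c_S$ is defined from the relation $R_S = \{(u,v) \in \mathbb I\m_X \times \mathbb I\m_X : u_S = v_S\}$, and $c_R$ is defined from the given presenting relation $R$. Since $R \subseteq R_S$ (any $R$-related pair is identified in $S$), every $c_S$-closed subgraph is automatically $c_R$-closed; that is the easy inclusion, giving $\Delta^{c_R} \subseteq \Delta^{c_S}$ for all $\Delta$. The substance is the reverse: I must show every $c_R$-closed subgraph of $\Gamma_X$ is $c_S$-closed, equivalently that if $u_S = v_S$ with $u,v \in \mathbb I\m_X$, then $u$ and $v$ label coterminal journeys in every $c_R$-closed graph whenever one of them does.

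To do this I would use that $u_S = v_S$ precisely when the pair $(u,v)$ lies in the congruence $\rho$ on the free $F$-inverse monoid generated by $R$. So $u_S = v_S$ holds iff there is a finite sequence $u = w_0, w_1, \dots, w_n = v$ of terms in $\mathbb I\m_X$ where each consecutive pair differs by a single ``elementary'' step: either an application of one of the defining identities of the variety of $F$-inverse monoids (in the signature $(\cdot, 1, {}^{-1}, {}^\m)$), or the substitution of one side of an $R$-pair for the other inside a context. It then suffices to prove that each such elementary step preserves the property of labeling a journey coterminal with a given one inside an arbitrary $c_R$-closed graph $\Delta$. For the steps coming from $R$ itself, this is almost immediate: inserting $v t t^{-1}$ type factors and using $\langle \ol{w_i} \rangle = \langle \ol{w_{i+1}} \rangle$ together with the closedness condition in Definition \ref{def:F-inverse-yields-closure} — the same bookkeeping already carried out in the proof of Proposition \ref{prop:language-of-F-Sch} with $P$-expansions. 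For the steps coming from the variety identities, the point is that two terms equal in the free $F$-inverse monoid span the same journey (up to the ambiguity in representing jumps, which does not affect the spanned subgraph, as noted in the subsection on $F$-inverse monoids), so ``$u$ labels a journey from $g$ to $h$ in $\Delta$'' is unaffected; here the identities $(w^{-1})^\m = (w^\m)^{-1}$ and the collapsing identity for ${}^\m$ recorded earlier are the relevant ones.

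A cleaner route, which I would actually prefer to write up, avoids a direct induction on derivation length: use the already-established machinery. By Proposition \ref{prop:FP-expansions}, $\langle \ol w \rangle^{c_R}$ is the union of iterated full $P$-expansions of $\langle \ol w \rangle$ with respect to $R$; and the $F$-Schützenberger graph $F\Gamma(w) = \langle \ol w \rangle^{c_S}$ is described by Proposition \ref{prop:language-of-F-Sch}. So it is enough to check that for every term $w \in \mathbb I\m_X$ the two subgraphs $\langle \ol w \rangle^{c_R}$ and $\langle \ol w \rangle^{c_S}$ of $\Gamma_X$ coincide. The inclusion $\langle \ol w \rangle^{c_R} \subseteq \langle \ol w \rangle^{c_S}$ is the easy one above. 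For $\supseteq$, a vertex $h$ (resp.\ an edge $e$) lies in $\langle \ol w \rangle^{c_S} = F\Gamma(w)$ iff, choosing suitable $u^\m v^\m$ spanning $\{1,h,w_G\}$ as in the proof of Lemma \ref{Lem:closure-operator-yields-S}(\ref{item:closure-F}), we have $(u^\m v^\m)_S \geq w_S$; I would then trace a derivation witnessing $(u^\m v^\m w^{-1} w)_S = w_S$ through $R$-steps and variety-steps and show each can be realized by a $P$-expansion on a graph already contained in $\langle \ol w \rangle^{c_R}$, so that $h$ (resp.\ $e$) ends up in $\langle \ol w \rangle^{c_R}$.

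The main obstacle is the bookkeeping around the variety identities of $F$-inverse monoids: unlike in the plain inverse monoid case where every derivation step is either a $P$-expansion-type substitution or a trivial free-monoid-with-involution manipulation, here one must confirm that rewriting via the ${}^\m$-identities (e.g.\ $(u_0 v_1^\m u_1 \cdots v_n^\m u_n)^\m = (u_0 v_1 u_1 \cdots v_n u_n)^\m$, and the interaction of ${}^\m$ with ${}^{-1}$ and with products) never changes the subgraph spanned by the associated journey, and that applying such an identity inside a context still corresponds to a legitimate move between $c_R$-closed graphs. Making the ``inside a context'' part precise — i.e.\ that congruence steps localize correctly to statements about journeys and their spanned subgraphs — is the only delicate point; everything else reduces to the journey arithmetic already done in Proposition \ref{prop:language-of-F-Sch} and Lemma \ref{Lem:4.6}.
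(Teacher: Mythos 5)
Your easy direction ($R \subseteq R_S$, hence every $c_S$-closed graph is $c_R$-closed, hence $\Delta^{c_R} \subseteq \Delta^{c_S}$ by Lemma \ref{lem:closure_morph_equiv}) is exactly the first line of the paper's proof. For the hard direction, however, you have chosen a genuinely different and much heavier route: an induction over the congruence on the free $F$-inverse monoid generated by $R$, tracing elementary steps (substitution of $R$-pairs inside contexts, plus the variety identities for $^\m$) and checking that each preserves the property of labelling a journey inside a $c_R$-closed graph. The strategy is not wrong, but the part you defer as ``bookkeeping'' is the actual content: you would need to fix a Mal'cev-style description of the generated congruence, handle contexts built from $\cdot$, $^{-1}$ and $^\m$ (including terms not in the normal form $\mathbb I\m_X$, for which ``the journey labelled by $w$'' is not yet defined), and verify that terms equal in the free $F$-inverse monoid span the same journey in $\Gamma_X$ from any basepoint. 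None of this is impossible, but none of it is written, so as it stands the hard direction is a plan rather than a proof.

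The paper's argument sidesteps every one of these issues by using the universal property of the presentation together with the machinery you already have. Since every $c_S$-closed graph is $c_R$-closed, Theorem \ref{Thm:catequivalent} gives a canonical morphism $S_{c_R} \to S_{c_S} \cong S$. In the other direction, one checks directly that $S_{c_R}$ \emph{satisfies the relations in} $R$: if $(u,v) \in R$ then $u_G = v_G$, and the definition of $c_R$-closedness gives $\langle \ol u \rangle \subseteq \langle \ol v \rangle^{c_R}$ and $\langle \ol v \rangle \subseteq \langle \ol u \rangle^{c_R}$, whence $u_{S_{c_R}} = (\langle \ol u \rangle^{c_R}, u_G) = (\langle \ol v \rangle^{c_R}, v_G) = v_{S_{c_R}}$. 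By the universal property of $\FInv\langle X \mid R\rangle$ there is then a canonical morphism $S \to S_{c_R}$; since canonical morphisms between $X$-generated objects are unique, the two composites are identities, so $S \cong S_{c_R}$ canonically, and Theorem \ref{Thm:catequivalent} converts this back into $c_S = c_R$. The single observation you are missing is that the defining property of $c_R$ makes $S_{c_R}$ a model of the presentation -- once you see that, the entire induction over derivations becomes unnecessary. If you do want a derivation-free version of your second route, this is it.
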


\begin{Proof}
It is clear by definition that any closed subgraph of $c_S$ is also closed with respect to $c_R$, so there is a morphism $c_R \to c_S$ in the category $\mathcal C(X, G)$, so by Theorem \ref{Thm:catequivalent}, there is also a canonical morphism $S_{c_R} \to S_{c_S} \cong S$. 
Notice furthermore that $S_{c_R}$ is an $X$-generated $F$-inverse monoid satisfying the relations in $R$: indeed if $u=v$ is a relation, then $u_G=v_G$, furthermore $\langle \ol u \rangle \subseteq \langle \ol v \rangle^{c_R}$ and $\langle \ol v \rangle \subseteq \langle \ol u \rangle^{c_R}$, hence 
$(\langle \ol u \rangle^{c_R}, u_G)=(\langle \ol v \rangle^{c_R}, v_G)$ indeed. Therefore there is a canonical morphism $S \to S_{c_R}$, hence $S$ and $S_{c_R}$ are canonically isomorphic. By Theorem  \ref{Thm:catequivalent}, this implies $c_S=c_R$ indeed. 
\end{Proof}

We then obtain the following statement as an immediate consequence of Propositions \ref{prop:FP-expansions} and \ref{prop:F-inverse-repr}.

\begin{Thm}
\label{thm:F-inverse-pres-main}
Let $S=\FInv \langle X \mid  R \rangle$ be an $X$-generated $F$-inverse monoid, and let $\Gamma_X$ be the Cayley graph of its greatest group image. Then the $F$-Schützenberger graph $F\Gamma(w)$ of a term $w \in \mathbb I\m _X$ is $\langle \ol w \rangle^{c_R}$, that is, the smallest $c_R$-closed subgraph of $\Gamma_X$ containing $\ol w$. Furthermore, $F\Gamma(w)=\bigcup_{i=1}^\infty \Delta_i$, where $\Delta_0=\langle \ol w \rangle$ and for $i \geq 1$, $\Delta_{i+1}$ is the full $P$-expansion of $\Delta_i$ with respect to $R$.
\end{Thm}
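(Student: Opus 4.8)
The plan is to deduce Theorem \ref{thm:F-inverse-pres-main} directly from the two results it cites, so essentially no new argument is needed --- the work has all been done, and the task is to assemble it cleanly.

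First I would recall the definition of $F\Gamma(w)$: by Definition \ref{def:F-Sch-graph}, $F\Gamma(w)=\langle \ol w\rangle^{c_S}$, where $c_S$ is the closure operator associated to $S$ as in Definition \ref{def:F-inverse-yields-closure}. The relation $R$ defining the presentation is (after taking its symmetric closure, as stipulated in Section \ref{Sec:presentations}) a symmetric relation on $\mathbb I\m_X$ all of whose pairs are equal in $S$, hence equal in $G=S/\sigma$; so it satisfies the hypotheses of Definition \ref{def:F-inverse-yields-closure} and gives rise to the closure operator $c_R$. Proposition \ref{prop:F-inverse-repr} asserts precisely that $c_R=c_S$ in this situation. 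Therefore $F\Gamma(w)=\langle \ol w\rangle^{c_S}=\langle \ol w\rangle^{c_R}$, which is by definition the smallest $c_R$-closed subgraph of $\Gamma_X$ containing $\ol w$ (using that the fixed points of a closure operator are closed under the relevant meets, so this smallest closed superset exists and equals the closure). This gives the first assertion.

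For the second assertion, I would apply Proposition \ref{prop:FP-expansions} with the relation $R$: it states that with $\Delta_0=\langle \ol w\rangle$ and $\Delta_{i+1}$ the full $P$-expansion of $\Delta_i$ with respect to $R$, one has $\langle \ol w\rangle^{c_R}=\bigcup_{i=1}^\infty \Delta_i$. Combining this with $F\Gamma(w)=\langle \ol w\rangle^{c_R}$ from the previous paragraph yields $F\Gamma(w)=\bigcup_{i=1}^\infty \Delta_i$, completing the proof.

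There is no real obstacle here; the only thing to be slightly careful about is bookkeeping --- making sure that the relation $R$ appearing in the statement is exactly the one fed into Definition \ref{def:F-inverse-yields-closure}, Proposition \ref{prop:F-inverse-repr}, and Proposition \ref{prop:FP-expansions}, and that its symmetrization (harmless, since the presentation defines the same congruence) is what is meant throughout. Once that is noted, the theorem is a one-line consequence of chaining the two propositions, so the "proof" is really just a pointer to Propositions \ref{prop:FP-expansions} and \ref{prop:F-inverse-repr}, exactly as the sentence preceding the theorem already anticipates.
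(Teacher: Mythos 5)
Your proposal is correct and matches the paper exactly: the paper presents this theorem as an immediate consequence of Propositions \ref{prop:FP-expansions} and \ref{prop:F-inverse-repr}, which is precisely the chain of reasoning you spell out (identifying $F\Gamma(w)=\langle \ol w\rangle^{c_S}=\langle \ol w\rangle^{c_R}$ via Proposition \ref{prop:F-inverse-repr}, then applying Proposition \ref{prop:FP-expansions}). Your bookkeeping remark about symmetrizing $R$ is also consistent with the paper's standing assumption in Section \ref{Sec:presentations}.
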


Recall that by Proposition \ref{prop:mutually-inverse-F}, for two terms $u,v \in \mathbb I\m _X$ we have $u_S=v_S$ if and only if $(F\Gamma(u), u_G)=(F\Gamma(v), v_G)$. In particular, if $u_S=v_S$, and $\Delta_0^u, \Delta_1^u, \ldots$ and $\Delta_0^v, \Delta_1^v, \ldots$ are sequences of finite graphs with $\bigcup_{i=1}^\infty \Delta_i^u=F\Gamma(u)$ and $\bigcup_{i=1}^\infty \Delta_i^v=F\Gamma(v)$, then there exists an index $i$ with $\ol v \subseteq \Delta_i^u$ and $\ol u \subseteq \Delta_i^v$. 
Thus $u_S=v_S$ can be algorithmically verified when true, by iteratively constructing the graphs $\Delta_i^u, \Delta_i^v$ $(i=0,1,...)$ as described in Theorem \ref{thm:F-inverse-pres-main}, and checking whether $\ol v \subseteq \Delta_i^u$ and $\ol u \subseteq \Delta_i^v$ holds.

We close the paper with an example. Consider the free inverse monoid $\FIM(X)$ on $X$. This is well-known to be $F$-inverse, where for $w \in (X \cup X^{-1})^*$, we have $(w_{\FIM(X)})^\m=(\red(w))_{\FIM(X)}$, where $\red(w)$ is the freely reduced form of $w$.

\begin{Prop}
The free inverse monoid $\FIM(X)$ is given by the presentation
$$\FInv \langle X \mid \red(w)=w^\m,\ w \in (X \cup X^{-1})^\ast \rangle.$$
\end{Prop}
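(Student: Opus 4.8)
The plan is to show that the $F$-inverse monoid $T=\FInv\langle X \mid \red(w)=w^\m,\ w \in (X \cup X^{-1})^\ast \rangle$ is canonically isomorphic to $\FIM(X)$. Since both are $X$-generated $F$-inverse monoids, and $\FIM(X)$ has greatest group image the free group $F(X)$, the natural strategy is to first identify the greatest group image of $T$ and then apply the machinery of Theorem \ref{thm:F-inverse-pres-main}: compute the closure operator $c_R$ associated to the presentation and show it is the identical closure operator on $\sub \Gamma_X$, where $\Gamma_X$ is the Cayley graph of $F(X)$. By the remark following Theorem \ref{Thm:catequivalent}, the identical closure operator corresponds to the initial object of $\mathcal F(X, F(X))$, which is precisely the $F$-inverse monoid of \cite{AKSz}; and it is well known (and recoverable from the Munn description, since $F(X)$ is free) that this initial object is $\FIM(X)$ itself. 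So the crux is: (i) the greatest group image of $T$ is $F(X)$, and (ii) every subgraph of $\Gamma_X$ is $c_R$-closed.

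For (i), note that abelianization-style reasoning is not needed: each relator pair $\red(w)=w^\m$ becomes $\red(w)=w$ in any group quotient (since $^\m$ is the identity operation in a group), and $\red(w)$ and $w$ already represent the same element of any group. Hence imposing the relations $R$ does not collapse the free group, so the greatest group image of $T$ is $F(X)$, with Cayley graph $\Gamma_X$ a tree (the standard Cayley graph of the free group). For (ii), I would take an arbitrary $\Delta \in \sub \Gamma_X$ and a relator pair $(u, v) = (\red(w), w^\m)$ with $w \in (X\cup X^{-1})^\ast$, and check the closedness condition from Definition \ref{def:F-inverse-yields-closure}: for vertices $g, h \in V(\Delta)$, the word $\red(w)$ labels a path (a genuine path, being a single component of a term with no $^\m$) from $g$ to $h$ in $\Delta$ if and only if $w^\m$ labels a journey from $g$ to $h$ in $\Delta$. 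The journey labeled by $w^\m = v_1^\m$ from $g$ consists of a single jump from $g$ to $g w_G = g\,\red(w)_G$, so it spans only the graph $\{g, h\}$ with $h = g\,\red(w)_G$; thus $w^\m$ labels a journey from $g$ to $h$ in $\Delta$ exactly when $g, h \in V(\Delta)$ and $h = g\,\red(w)_G$. On the other hand, since $\Gamma_X$ is a tree, if $g$ and $h = g\,\red(w)_G$ both lie in $V(\Delta)$, the unique reduced path between them in $\Gamma_X$ — which is exactly the path labeled $\red(w)$, as $\red(w)$ is freely reduced — automatically lies in $\Delta$ (connectedness is not assumed, but the path labeled $\red(w)$ from $g$ is the unique reduced $g$-to-$h$ path in the tree, and every edge of it is forced to be in $\Delta$ once its endpoints are, by the deterministic tree structure — here one uses that a subgraph containing two vertices of a tree-path need not contain the path, so this step needs care).

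The main obstacle, then, is precisely this last point: in a tree, a subgraph $\Delta$ containing vertices $g$ and $h$ need NOT contain the reduced path between them, so the naive claim "$\red(w)$ labels a path in $\Delta$ iff its endpoints are in $\Delta$" is false. The fix is to observe that the closure operator $c_R$ is generated by forcing, for each relator pair, one direction of an equivalence, and to instead argue directly that $\langle \ol w \rangle^{c_R} = \langle \ol w \rangle$ for every term $w \in \mathbb{I}\m_X$ using the $P$-expansion description of Theorem \ref{thm:F-inverse-pres-main}: starting from $\Delta_0 = \langle \ol w \rangle$, a full $P$-expansion with respect to a relator $\red(w')=(w')^\m$ either (a) sees a path labeled $\red(w')$ in $\Delta_i$ and adds the jump-spanned graph $\{g, h\}$ with $g, h$ its endpoints — which are already in $\Delta_i$, so nothing is added; or (b) sees a journey labeled $(w')^\m$, i.e. a jump between two vertices $g, h \in V(\Delta_i)$ with $h = g\,\red(w')_G$, and adds the path labeled $\red(w')$ — but since $g, h \in V(\Delta_i)$ already, and... here is where one must genuinely check that the added reduced path was already present, which holds because $\Delta_i$ is obtained from $\langle \ol w\rangle$ by adding spans of journeys, and one shows inductively that $\Delta_i$ is "reduced-path-closed" between any two of its vertices. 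This inductive invariant — that each $\Delta_i$ contains the reduced $\Gamma_X$-path between any two of its vertices — is the technical heart of the argument; once it is established, $\Delta_{i+1} = \Delta_i$ for all $i$, so $F\Gamma(w) = \langle \ol w\rangle$, meaning $c_R$ is the identical closure operator, and $T \cong \FIM(X)$ by Proposition \ref{prop:mutually-inverse-F} and the identification of the initial object.
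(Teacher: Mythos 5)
Your identification of the greatest group image as the free group $F(X)$ is fine, but the rest of the argument rests on a claim that is false: $c_R$ is \emph{not} the identical closure operator on $\sub \Gamma_X$. Take a nontrivial reduced word $v$ and the relator $\red(v)=v^\m$. The term $v^\m$ labels a journey (a single jump) from $1$ to $v_G$ in the two-vertex, edgeless subgraph $\{1,v_G\}$, so closedness would force $\red(v)$ to label a coterminal path there, which it does not; hence $\{1,v_G\}$ is not $c_R$-closed. The same example kills your proposed inductive invariant at the base case: for $w=v^\m$ one has $\Delta_0=\langle \ol w\rangle=\{1,v_G\}$, which does not contain the reduced path between its two vertices, and the first full $P$-expansion genuinely adds that path. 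What the relators actually force is that a closed subgraph contain the geodesic between any two of its vertices, i.e.\ (since $\Gamma_X$ is a tree) that it be connected; so $\Delta^{c_R}$ is the connected hull of $\Delta$, the compact sets are the finite subtrees containing the relevant vertices, and $S_{c_R}$ is exactly Munn's model of $\FIM(X)$. That computation is the whole content of the proof.

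There is a second, independent error: even if $c_R$ were the identity on $\sub \Gamma_X$, the resulting monoid would be the initial object of $\mathcal F(X,F(X))$, which is the \emph{free $F$-inverse monoid} on $X$ (the $F$-inverse analogue of the Margolis--Meakin expansion from \cite{AKSz}), not $\FIM(X)$. It is the identity closure operator on $\csub \Gamma_X$, in the $E$-unitary category, that yields $\FIM(X)$; you have conflated the two cases. Under your reading the presentation would impose no relations at all, making the proposition vacuous, and it would contradict the next proposition of the paper, which exhibits $\FIM(X)$ as a proper, non-finitely-presented quotient of the free $F$-inverse monoid.
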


\begin{proof}
Let $S=\FInv \langle X \mid \red(w)=w^\m,\ w \in (X \cup X^{-1})^\ast \rangle$. Observe that its greatest group image is the free group on $X$, we denote its Cayley graph by $\Gamma_X$. Consider the closure operator $c_R$ on $\sub\Gamma_X$ corresponding to the relations of the presentation. We claim that the closed subgraphs with respect to $c_R$ are exactly the connected subgraphs of $\Gamma_X$. Indeed, let $\Delta \in \sub \Gamma_X$, and observe that $w^\m$ labels a journey between two vertices $g,h$ of $\Delta$ if and only if $h=g \red(w)$, whereas $\red(w)$ labels a journey between two vertices $g,h$ of $\Delta$ if and only if $h=g \red(w)$ and $\Delta$ contains the unique path from $g$ to $h$. It follows that $\Delta$ is $c_R$-closed if and only if it contains the unique path between any pair of its vertices, i.e. iff it is connected.

It follows that the inverse monoid $S_{c_R}$ consist exactly of pairs $(\Delta, g)$ where $\Delta \in \csub \Gamma_X$ and $1,g \in \Delta$, which is exactly the model of $\FIM(X)$ given by Munn. Since $S \cong S_{c_R}$ by Propositions \ref{prop:mutually-inverse-F} and \ref{prop:F-inverse-repr}, the first claim follows.
\end{proof}

\begin{Prop}
The free inverse monoid $\FIM(X)$ is not finitely presented as an $F$-inverse monoid. 
\end{Prop}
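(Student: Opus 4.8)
The plan is to argue by contradiction: suppose $\FIM(X)$ admits a finite presentation $\FInv\langle X \mid R\rangle$ with $R$ finite. By Proposition \ref{prop:F-inverse-repr}, the associated closure operator $c_R$ equals $c_S$ where $S=\FIM(X)$, and by the previous proposition the $c_S$-closed subgraphs of the Cayley graph $\Gamma_X$ of the free group are exactly the connected subgraphs. The key point is that a finite relation set $R$ only ever forces the addition of \emph{bounded} pieces during $P$-expansion: each relator pair $(u,v)\in R$ has $\langle \ol u\rangle$ and $\langle \ol v\rangle$ of some bounded diameter, say at most $N$. So starting from a disconnected finite graph $\Delta_0$ consisting of two vertices at distance greater than $N$ (with no edges, or with two far-apart single edges), I would show that no $P$-expansion with respect to $R$ can ever connect the two components: a journey labeled by some $u$ with $(u,v)\in R$ lying in the current graph must have its endpoints within distance $N$ inside a single component, so the coterminal $v$-journey also stays within that component (since in the free group Cayley graph the graph spanned by $\ol v$ translated appropriately has diameter $\le N$, and connects points already at distance $\le N$). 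Hence $\bigcup_i \Delta_i$ is disconnected, so it is not $c_S$-closed, contradicting Theorem \ref{thm:F-inverse-pres-main} which says $\bigcup_i \Delta_i = \langle \ol w\rangle^{c_R}=\langle\ol w\rangle^{c_S}$ must be the smallest connected subgraph containing $\Delta_0$.

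Concretely, I would fix $X$ with $|X|\geq 1$ (the statement is vacuous or trivial only if one wants to be careful about $X=\emptyset$, which one should note), pick a letter $x\in X$, and take $w = x^{N+2}\, x^{-(N+2)}$ reduced to... — actually cleaner: take $w$ to be a term whose journey $\ol w$ consists of two isolated vertices far apart, e.g. $w = (x^{N+2})^\m$ is a single jump so $\langle \ol w\rangle = \{1, x^{N+2}\}$, a graph with no edges and two vertices at distance $N+2$. Its closure $\langle\ol w\rangle^{c_S}$ is the geodesic segment from $1$ to $x^{N+2}$, which has $N+2$ edges. I would then prove by induction on $i$ that $\Delta_i$ is contained in the $N$-neighborhood of $\{1\}$ union the $N$-neighborhood of $\{x^{N+2}\}$ — two disjoint balls — so in particular $\Delta_i$ is disconnected for all $i$, contradicting that $\bigcup_i\Delta_i$ equals the connected segment. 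The inductive step is the heart: if $\Delta_i$ lies in the union of the two balls and $j$ is a $u$-journey in $\Delta_i$ for some $(u,v)\in R$, then $\alpha(j),\omega(j)$ lie in a common ball (distance $\le N$ forces this, taking $N$ larger than twice the max relator diameter if needed, and noting the two ball-centers are $N+2$ apart), and the coterminal $v$-journey spans a graph of diameter $\le N$ through points in that ball, hence stays in (an enlargement of) that same ball; choosing the ball radius generously from the start absorbs this.

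The main obstacle I anticipate is pinning down the correct bookkeeping constant and verifying that the $v$-journey genuinely cannot escape its ball — one must be careful that a journey, unlike a path, can ``teleport,'' so I need that the \emph{endpoints} of the $v$-journey are fixed (they equal those of the $u$-journey, which are close together in one ball) and that the spanned graph $\langle \ol v\rangle$, being a fixed finite graph translated so that its image contains those two close endpoints, has all its vertices within a fixed distance of them. In the free group this is immediate because the diameter of the \emph{vertex set} of $\langle \ol v\rangle$ is a fixed finite number depending only on $v$; a small subtlety is that a journey's spanned subgraph need not be connected, but each of its connected pieces contains an endpoint-or-jump-endpoint, and the jump endpoints of $\ol v$ relative to its own start are at bounded distance. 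So taking the ball radius to exceed $\max_{(u,v)\in R}\big(\mathrm{diam}\,V\langle\ol u\rangle + \mathrm{diam}\,V\langle\ol v\rangle\big)$ and the separation of the two balls to exceed twice that, the induction closes. The remaining verification — that the contradiction is with Theorem \ref{thm:F-inverse-pres-main} and Proposition \ref{prop:F-inverse-repr} as stated — is then routine.
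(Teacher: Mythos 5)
Your overall strategy --- the witness $w=(x^{N+2})^\m$ with $\langle\ol w\rangle=\{1,x^{N+2}\}$, whose $c_S$-closure is the connected geodesic but whose $c_R$-closure you want to show is disconnected --- is essentially the paper's, but the key step has a genuine gap. Your inductive invariant is that $\Delta_i$ stays inside two \emph{fixed} disjoint balls $B_\rho(1)\cup B_\rho(x^{N+2})$. This is not preserved by a full $P$-expansion: a $P$-expansion does not merely re-read a journey inside the current graph, it \emph{adds} the subgraph spanned by the coterminal $v$-journey, whose vertices may lie up to a relator-length $K$ \emph{outside} $\Delta_i$. So from the inductive hypothesis you only get $\Delta_{i+1}\subseteq B_{\rho+K}(1)\cup B_{\rho+K}(x^{N+2})$: the certified radius grows by up to $K$ at every round, and since there are infinitely many rounds, no ``generous choice of $\rho$ from the start'' absorbs this --- eventually the two regions you can control come within a relator-length of each other and nothing prevents a bridging expansion. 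Your phrase ``the coterminal $v$-journey also stays within that component'' is exactly the false step: its \emph{endpoints} do, but its newly added interior enlarges the component, and the enlargements compound.

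The repair is algebraic rather than metric: show that $\Delta_0=\{1,g\}$ is \emph{already} $c_R$-closed, so that no $P$-expansion applies at all and the sequence is constant. Since $\Delta_0$ has no edges, any relator side $u$ readable in it must have every path-factor empty and every jump labelled by a word representing $1$ in $G$ (a jump between $1$ and $g$ would need a label of group-length exceeding the relator bound); hence $u_S=1$, hence $v_S=1$, and since the complement of $\{1\}$ in $\FIM(X)$ is an ideal this forces every factor of $v$ to equal $1$, so $\langle\ol v\rangle=\{1\}$ and $v$ is readable in $\Delta_0$ too. This is what the paper does (showing $\{1,g\}^{c_R}=\{1,g\}\neq\{1,g\}^{c_S}$ directly, with no induction over expansions). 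Two smaller points: the statement concerns finite presentability over an arbitrary finite generating set $Y$, not just over $X$, so either run the argument in $\Gamma_Y$ as the paper does or invoke the standard fact that finite presentability in a variety is independent of the chosen finite generating set; and you should close the loop explicitly by noting that $\{1,g\}^{c_R}=\{1,g\}$ gives $F\Gamma(w^\m)\ne F\Gamma(\red(w))$, hence $w^\m_S\ne\red(w)_S$, contradicting $S\cong\FIM(X)$.
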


\begin{proof}
Suppose for contradiction that $S=\FInv \langle Y \mid R \rangle$ is a finite presentation for $\FIM(X)$, i.e. $Y$ and $R$ are finite. Let $X=\{x_1, \ldots, x_k\}$ and let $w_i \in \mathbb I\m_Y$ be terms with $(w_i)_S=(x_i)_S$. Then in particular, we have that the set $X$ generates $S$ via the map $\iota_S \colon x_i \mapsto (w_i)_S$, and also generates $S/\sigma=G$ via $\iota_G \colon x_i \mapsto (w_i)_G$.

Let $K$ be the length of the longest relator occurring in $R$, where the length of a term $u_0 v_1^\m u_1  \cdots v_n^\m u_n$ is defined to be $|u_0|+|v_1|+\ldots+|v_n|+|u_n|$. Since $G$ is a free group, there exists a vertex $g \in \Gamma_Y$ such that the distance between $1$ and $g$ in $\Gamma_Y$ is at least $K+1$. We claim that the graph $\{1, g\}$ is $c_R$-closed. Indeed, any term labeling a $1$ to $g$ journey would have length at least $K+1$ by choice, so no relator can be read in the graph $\{1, g\}$. Choose a representative of $g$ in the generators $X$, i.e. a word $w \in (X \cup X^{-1})^\ast$ with $w_G=g$. Then $F\Gamma(w_S^\m) =\{1,g\} \subseteq \Gamma_Y$, whereas  $F\Gamma(\red(w)_S)$ is a subgraph of $\Gamma_Y$ containing a $1$ to $g$ path, hence $w_S^\m \neq \red(w)_S$. But since this contradicts the assumption that $S$ is isomorphic to $\FIM(X)$.
\end{proof}

\end{document}